\newcommand{\blind}{0}
\begin{document}

\def\cA{\mathcal{A}}
\def\cB{\mathcal{B}}
\def\cC{\mathcal{C}}
\def\cD{\mathcal{D}}
\def\cE{\mathcal{E}}
\def\cF{\mathcal{F}}
\def\cG{\mathcal{G}}
\def\cH{\mathcal{H}}
\def\cI{\mathcal{I}}
\def\cJ{\mathcal{J}}
\def\cK{\mathcal{K}}
\def\cL{\mathcal{L}}
\def\cM{\mathcal{M}}
\def\cN{\mathcal{N}}
\def\cO{\mathcal{O}}
\def\cP{\mathcal{P}}
\def\cQ{\mathcal{Q}}
\def\cR{\mathcal{R}}
\def\cS{\mathcal{S}}
\def\cT{\mathcal{T}}
\def\cU{\mathcal{U}}
\def\cV{\mathcal{V}}
\def\cW{\mathcal{W}}
\def\cX{\mathcal{X}}
\def\cY{\mathcal{Y}}
\def\cZ{\mathcal{Z}}

\def\bA{\mathbb{A}}
\def\bB{\mathbb{B}}
\def\bC{\mathbb{C}}
\def\bD{\mathbb{D}}
\def\bE{\mathbb{E}}
\def\bF{\mathbb{F}}
\def\bG{\mathbb{G}}
\def\bH{\mathbb{H}}
\def\bI{\mathbb{I}}
\def\bJ{\mathbb{J}}
\def\bK{\mathbb{K}}
\def\bL{\mathbb{L}}
\def\bM{\mathbb{M}}
\def\bN{\mathbb{N}}
\def\bO{\mathbb{O}}
\def\bP{\mathbb{P}}
\def\bQ{\mathbb{Q}}
\def\bR{\mathbb{R}}
\def\bS{\mathbb{S}}
\def\bT{\mathbb{T}}
\def\bU{\mathbb{U}}
\def\bV{\mathbb{V}}
\def\bW{\mathbb{W}}
\def\bX{\mathbb{X}}
\def\bY{\mathbb{Y}}
\def\bZ{\mathbb{Z}}

\def\dE{\mathds{E}}
\def\dM{\mathds{M}}
\def\dP{\mathds{P}}
\def\dR{\mathds{R}}
\def\dQ{\mathds{Q}}

\def\sA{\mathscr{A}}
\def\sB{\mathscr{B}}
\def\sC{\mathscr{C}}
\def\sD{\mathscr{D}}
\def\sE{\mathscr{E}}
\def\sF{\mathscr{F}}
\def\sG{\mathscr{G}}
\def\sH{\mathscr{H}}
\def\sI{\mathscr{I}}
\def\sJ{\mathscr{J}}
\def\sK{\mathscr{K}}
\def\sL{\mathscr{L}}
\def\sM{\mathscr{M}}
\def\sN{\mathscr{N}}
\def\sO{\mathscr{O}}
\def\sP{\mathscr{P}}
\def\sQ{\mathscr{Q}}
\def\sR{\mathscr{R}}
\def\sS{\mathscr{S}}
\def\sT{\mathscr{T}}
\def\sU{\mathscr{U}}
\def\sV{\mathscr{V}}
\def\sW{\mathscr{W}}
\def\sX{\mathscr{X}}
\def\sY{\mathscr{Y}}
\def\sZ{\mathscr{Z}}

\def\mA{\mathsf{A}}
\def\mB{\mathsf{B}}
\def\mC{\mathsf{C}}
\def\mD{\mathsf{D}}
\def\mE{\mathsf{E}}
\def\mF{\mathsf{F}}
\def\mG{\mathsf{G}}
\def\mH{\mathsf{H}}
\def\mI{\mathsf{I}}
\def\mJ{\mathsf{J}}
\def\mK{\mathsf{K}}
\def\mL{\mathsf{L}}
\def\mM{\mathsf{M}}
\def\mN{\mathsf{N}}
\def\mO{\mathsf{O}}
\def\mP{\mathsf{P}}
\def\mQ{\mathsf{Q}}
\def\mR{\mathsf{R}}
\def\mS{\mathsf{S}}
\def\mT{\mathsf{T}}
\def\mU{\mathsf{U}}
\def\mV{\mathsf{V}}
\def\mW{\mathsf{W}}
\def\mX{\mathsf{X}}
\def\mY{\mathsf{Y}}
\def\mZ{\mathsf{Z}}

\newcommand{\pd}[1]{\partial_{#1}}      
\newcommand{\1}{\mathbbm{1}}            

\newtheorem{theorem}{Theorem}
\newtheorem{proposition}[theorem]{Proposition}
\newtheorem{lemma}[theorem]{Lemma}
\newtheorem{corollary}[theorem]{Corollary}

\theoremstyle{definition}
\newtheorem{definition}[theorem]{Definition}
\newtheorem{example}[theorem]{Example}
\theoremstyle{remark}
\newtheorem{remark}[theorem]{Remark}
\newtheorem{question}[theorem]{Question}
\newtheorem{problem}[theorem]{Problem}
\newtheorem{NB}[theorem]{Nota Bene}

\numberwithin{equation}{section}
\numberwithin{theorem}{section}


\if0\blind
{
\title{\vspace{-1.25cm} Nonparametric Adaptive Bayesian Stochastic Control Under Model Uncertainty}
\author{Tao Chen\thanks{Department of Mathematics,
University of Michigan Ann Arbor, East Hall 2859, 530 Church Street, Ann Arbor, MI 48109-1043}
\and
\and Jiyoun Myung\thanks{Department of Statistics and Biostatistics,
California State University, East Bay, North Science 319, Hayward, CA 94542}
}

\date{}

\maketitle
}\fi

\if1\blind
{
  \bigskip
  \bigskip
  \bigskip
  \begin{center}
    {\LARGE\bf Adaptive Bayesian Method and its Application to Stochastic Control}
\end{center}
  \medskip
  \bigskip
} \fi

\begin{abstract}
In this paper we propose a new methodology for solving a discrete time stochastic Markovian control problem under model uncertainty. By utilizing the Dirichlet process, we model the unknown distribution of the underlying stochastic process as a random probability measure and achieve online learning in a Bayesian manner. Our approach integrates optimizing and dynamic learning. When dealing with model uncertainty, the nonparametric framework allows us to avoid model misspecification that usually occurs in other classical control methods. Then, we develop a numerical algorithm to handle the infinitely dimensional state space in this setup and utilizes Gaussian process surrogates to obtain a functional representation of the value function in the Bellman recursion. We also build separate surrogates for optimal control to eliminate repeated optimizations on out-of-sample paths and bring computational speed-ups. Finally, we demonstrate the financial advantages of the nonparametric Bayesian framework compared to parametric approaches such as strong robust and time consistent adaptive.

\vspace{1em}
\noindent {\bf Key words:} nonparametric adaptive Bayesian control, Dirichlet process, Gaussian process surrogates, utility maximization, model uncertainty, optimal portfolio.
\end{abstract}


\section{Introduction}
\label{sec:intro}

In solving stochastic control problems, attention has been paid to model risk, the uncertainty about the underlying system dynamics.
As discussed in \cite{Knight1921},  such type of uncertainty must be distinguished from measurable randomness of system realizations, and hereby called Knightian uncertainty.
This ambiguity is expressed either in terms of a parametric family of distributions or a set of probability measures.
In practice, probabilities of interest are often estimated through observing system outcomes.
Then, several families of approaches, such as ``robust'' and ``learning'' methods, are applied to tackle the control problem.

The central idea of robust techniques, which goes back to \cite{GS1989}, is to find the optimal strategy that performs the best in the worst-case scenario.
Hence, a robust stochastic control problem is a form of inf-sup optimization, where supremum is over the family of probabilities and infimum is taken across the control set.
This area has been extensively studied in literature using different approaches, some of which are briefly described in Section~\ref{sec:review_robust}.
There are two issues that must be addressed in these approaches.
First, one usually assumes equal weights for all possible distributions or probability measures in the considered family even when some are much less plausible than the others.
To overcome this drawback, a penalty function of probabilities can be added to the objective function that is going to be optimized.
One challenge in this treatment then becomes how the penalty function is properly chosen.
Second, the set of probabilities is frequently fixed in time even in a dynamic environment, in which 
uncertainty about the system can be reduced as newly incoming information about the underlying system becomes available. To address this issue, the adaptive robust methodology is proposed in \cite{BCCCJ2019}, which initiates the study of dynamically reducing uncertainty of the underlying model while solving robust stochastic control problems.
Serving as a fundamental tool for such an approach, an innovative statistical method of online updating the confidence regions for the unknown system parameters is introduced in \cite{BCC2017}.

Traditionally, one uses the Bayesian method (cf. \cite{Rieder1975, KV2015}) to incorporate learning into solving control problems.
The rationales behind such methods are twofold.
On one hand, the underlying system is learned through observations of the data. On the other hand, by modeling the uncertainty about the true parameters as random variables, posteriors determine the weights to different models.
Hence, the inf-sup formulation is replaced by the weighted average across all possible models, which leads to a ``inf-integral'' problem.
One could naively use the Bayesian technique to learn the system, then subsequently control the learnt system. One concern regarding such implementation for dynamically consistent problems is that the corresponding optimal control is essentially myopic by separating the learning from the control.
As discussed in \cite{KV2015}, the optimal control should ideally be maintained even during the learning phase.
Contrary to the naive Bayesian algorithm that separates the two phases, some other approaches (cf. \cite{Sirbu2014,BCP2016,BCCCJ2019}) account for the controller being cognizant that knowledge of the unknown model may change in the future and therefore, such issues should be addressed at the present time.
Nevertheless, a wholistic Bayesian framework known as Bayesian adaptive control has been explored, in which control and online learning are integrated together.
In Section~\ref{sec:review_bayesian}, we will briefly review such work.

In this study, we propose a nonparametric adaptive Bayesian methodology that solves stochastic control problems under model uncertainty in a discrete time setup according to the Bellman principle.
Some earlier related parametric frameworks are e.g., \cite{Baeuerle2011,KV2015}.
In contrast with these works, our setup does not assume any model for the underlying system process as such. The proposed approach is more data driven and avoids the issue of model misspecification.
In addition, we prove that the optimal control problem satisfies the Bellman principle.
By considering a Borel measurable loss function, we show that optimal selectors exist and are universally measurable with respect to the relevant augmented state variable.
We also use the machine learning technique, namely the Gaussian process surrogates, to numerically solve the Bellman equations. 

The paper is organized as follows.
In Section~\ref{sec:review} we briefly review some of the existing methodologies for solving stochastic control problems subject to model uncertainty, from both robust and Bayesian perspectives. We introduce our nonparametric adaptive Bayesian approach in Section~\ref{sec:method} and present the theoretical results in Section~\ref{sec:sol}.
In Section~\ref{sec:numerics}, as an illustrative example, a utility maximization problem of optimal investment is considered. We solve the problem by utilizing the proposed approach combined with some machine learning techniques.
Finally, we provide a comparative analysis to the existing control methods.

\section{Existing Methodologies for Stochastic Control Problems under Model Uncertainty}
\label{sec:review}

We start our discussion with a review of classical methods and novel approaches introduced recently for solving dynamically consistent stochastic control problems subject to Knightian uncertainty.

\subsection{Robust Methodologies}
\label{sec:review_robust}

In this section, we will mainly discuss the existing parametric robust techniques for dealing with model uncertainty.
Nonetheless, readers should also be aware of other nonparametric robust methods proposed in the past few decades.
In \cite{Cont2006}, the author treats model uncertainty as multiple probability measures and studies its impact on pricing derivatives.
The topic has been receiving more and more attention due to the last financial crisis. The copula approach is partially criticized for the disaster, which the financial industry was using for pricing financial products such as Collateralized Debt Obligations (CDOs) while not accounting for the potential model risk.
Enormous amount of effort has been placed on addressing the issue after the financial market meltdown.
In the breakthrough paper \cite{BouchardNutz2015}, the authors prove a version of the first fundamental theorem of asset pricing under model uncertainty in the quasi-sure sense.
A related work is \cite{BayraktarZhang2016}, which studies the topic by taking into account the transaction costs.
The adaptive robust framework introduced in \cite{BCCCJ2019} incorporates reducing the uncertainty in the robust method, and is applied to time-inconsistent Markovian control problems under model uncertainty in the follow-up work \cite{BCC2020}.

Although nonparametric robust methods are theoretically sound, most of them are difficult to implement in practice due to the optimization required over a family of probability measures.
On the contrary, robust techniques in the parametric setup have been widely used by large banks when addressing the model risk.
By imposing a parametric model with unknown parameters, the numerical part of the work becomes significantly easier as one optimizes over a set of numbers rather than abstract probabilities measures.
To this end, we will go through several important setups of robust stochastic control problems.
In Section~\ref{sec:numerics}, we will also compare our approach to one of the discussed methodologies, strong robust, via an illustrative example.

Let $(\Omega, \cF)$ be a measurable space, and some positive integer $T$ be a fixed time horizon.
Consider a random process $\{Y_t,\ t=0,\ 1,\ \ldots,\ T\}$ taking values in some measurable space. The process $\{Y_t\}$ is assumed to be observed, but its true law is from a family of probability distributions $\{\bP_\theta,\theta\in\mathbf{\Theta}\}$ and corresponds to the unknown parameter $\theta^*$. Denote by $\bF=(\cF_t,\ t=0,\ \ldots,\ T)$ the natural filtration generated by the process $\{Y_t\}$.
A family $\cU$ of $\bF$-adapted processes $\{\varphi_t\}$ that takes values in some measurable space is considered as the set of admissible controls.
Additionally, let $L$ be a function of $Y^0:=\{Y_0,\ldots,Y_T\}$ and $\varphi^0:=\{\varphi_0,\ldots,\varphi_{T-1}\}$.
A stochastic control problem at hand is then formulated as
\begin{align}\label{eq:nouncertainty}
	\inf_{\{\varphi_t\}\in\cU}\dE_{\theta^*}[L(Y^0,\varphi^0)],
\end{align}
given that one knows $\theta^*$.

However, subjected to the Knightain uncertainty, one cannot deal with problem~\eqref{eq:nouncertainty} since the value of $\theta^*$ is unknown. Various robust methodologies are proposed in view of such ambiguity:
\begin{itemize}
\item
\textit{the (static) robust control approach}
\begin{align}\label{eq:static_robust}
	\inf_{\{\varphi_t\}\in\cU}\sup_{\theta\in\mathbf{\Theta}}\bE_\theta[L(Y^0,\varphi^0)],
\end{align}
which optimizes the objective function over the worst-case model through the whole time scale, is discussed in, e.g., \cite{HSTG2006,HS2008,BasarBernhardBook1995}.
\item
\textit{the strong robust control approach}
\begin{align}\label{eq:strong_robust}
	\inf_{\{\varphi_t\}\in\cU}\sup_{\bQ\in\cQ^{\mathbf{K}}}\bE_\bQ[L(Y^0,\varphi^0)],
\end{align}
searches for the worst-case model in each single time period.
Above $\cQ^{\mathbf{K}}$ is a set of probability measures on the canonical space, and $\mathbf{K}$ is the set of sequences of $\{\theta_t\}$ chosen by a Knightian adversary against the controller (cf. \cite{Sirbu2014,BCP2016}).
\item
\textit{the adaptive robust control approach}
\begin{align}\label{eq:adaptive_robust}
	\inf_{\{\varphi_t\}\in\cU}\sup_{\bQ\in\cQ^{\mathbf{\Psi}}}\bE_\bQ[L(Y^0,\varphi^0)],
\end{align}
incorporates learning into the robust methodology by dynamically shrinking the uncertainty set and finds the worst-case model in each time period.
Above $\cQ^{\mathbf{\Psi}}$ is a set of probability measures on the relevant canonical space.
The family $\cQ^{\mathbf{\Psi}}$ is constructed in a way that the set of adversary strategies $\mathbf{\Psi}$ consists of the set-valued processes $\tau(t,\hat{\theta}_t)$ which, for instance, can be chosen as the confidence region of $\theta^*$ at time $t$ based on point estimator $\hat{\theta}_t$. For more details, we refer the readers to \cite{BCCCJ2019}.
\end{itemize}

The classical (but static) robust method is usually conservative by its nature.
As shown in \cite{BCCCJ2019}, for an optimal investment problem that requires the controller to dynamically allocate the wealth in the risk-free asset and a risky asset, the static robust approach will lead to investment in the risk-free asset only through the whole time scale.
As discussed in \cite{LSS2006}, ``If the true model is the worst one, then this solution will be nice and dandy. However, if the true model is the best one or something close to it, this solution could be very bad (that is, the solution need not be robust to model error at all!).''

The strong robust method tries to overcome this drawback by considering the worst-case model at each time period.
While making decisions at time $t$, the controller takes into account that she could change her opinion about the worst one in the future and adapts her strategies to such possibility.
However, as demonstrated in \cite{BCCCJ2019}, the strong robust provides the exact same solution as the static robust approach for certain problems.

A new framework called adaptive robust is proposed recently.
In view of the limitations of the two tactics mentioned above, the adaptive robust method addresses the issue by dynamically updating the parameter space and removing the unlikely models out of consideration.
This procedure is completed by learning about the system dynamics and utilizing the recursively constructed confidence regions for the unknown parameters (cf. \cite{BCC2017}).
When the penalty term is absent in the objective function, it also partially solves the problem of (unreasonably) considering all possible models with equal weights, since some implausible values of the parameters will be removed due to the learning process.
The strong robust approach is essentially a special case of the adaptive robust by fixing the parameter space throughout.
The challenges in scaling such method to high dimensional problems also inspire employment of machine learning techniques to solve robust control problems.
In \cite{CL2019}, the authors propose and develop a novel algorithm for the adaptive robust control by utilizing the ideas from regression Monte Carlo, adaptive experimental design, and statistical learning.
Numerical studies in the paper, as well as in \cite{BCCCJ2019}, show that the adaptive robust achieves a sound balance between being conservative and aggressive.

\subsection{Bayesian Methodologies}
\label{sec:review_bayesian}

As discussed previously, methods of using the Bayesian theory to solve stochastic control problems under model uncertainty have been developed for quite awhile.
The so-called Bayesian adaptive control is studied in various projects, and we refer readers to e.g. \cite{KV2015,Rieder1975} for detailed discussions.
Both references integrate learning (in a Bayesian manner) and optimization, and use the Bellman principle to solve the control problem.
In \cite{KV2015}, sequence of the Bayesian estimators of the unknown parameters constructed via the filtering technique is augmented to the state process,
and the stochastic optimal control problem with partial observations is turned into one with complete observations, which can be solved by dynamic programming.
In \cite{Rieder1975}, the author considers a non-stationary Bayesian dynamic decision model and reduces it to decision models with completely known transition law.
The strategy is the same as in \cite{KV2015}: to augment the set of posterior distributions to the state space.
A similar discussion can also be found in \cite{Baeuerle2011}.
We hereby summarize the corresponding formulation of control problems as follows,
\begin{align}\label{eq:adap_bayesian}
	\inf_{\{\varphi_t\}\in\cU}\int_{\mathbf{\Theta}}\bE_{\theta}[L(Y^0,\varphi^0)]\nu(d\theta),
\end{align}
where $\nu$ is the prior distribution on the parameter space $\mathbf{\Theta}$.
When solving such a problem according to the Bellman principle, intermediate expectations are computed according to the latest posterior distributions.
In a recent work \cite{Cohen2018}, the author considers a filtering problem of discrete time hidden Markov models subject to model uncertainty and uses nonlinear expectations to model the uncertainty.
In particular, the expectation taken under uncertainty is treated as a nonlinear expectation and formulation of the control problem is dynamically consistent so that solutions are obtained by solving Bellman equations.

We want to mention that most of the aforementioned works consider a parametric model for the system process.
This methodology has shortcomings, as enforcing a specific family of distributions for an unknown law is arguably not the optimal starting point.
Nonparametric problems can be considered to address this issue.
In \cite{Ferguson1973,Ferguson1974}, the Dirichlet process is introduced as a prior distribution on the space of probability measures and indeed shown to yield some desirable properties for handling such kind of problems.

A Dirichlet process can be viewed as a probability measure on the set of probability distributions and therefore a random probability measure.
It is shown in \cite{Ferguson1973} that, with respect to the weak convergence topology, the support of a Dirichlet process contains any probability measure whose support is contained in the support of the parameter of the Dirichlet process (cf. Definition~\ref{def:dirichlet} and discussion).
More importantly, the posterior given a sample of observations from the true probability distribution is also a Dirichlet process. Such properties shed light on incorporating dynamic learning into stochastic control problems in a nonparametric way.

Given these desirable properties, a theoretical framework utilizing the Dirichlet process can potentially achieve some success in solving stochastic control problems under model uncertainty.
Indeed, the author in \cite{Ferguson1974} explored using Dirichlet process to handle uncertainty and solve an adaptive investment problem (see \cite[Section~5]{Ferguson1974} for more discussion).
However, a complete and detailed theory of nonparametric Bayesian control, to the best of our knowledge, has not been established.
In this paper, an adaptive Bayesian framework built upon the tools developed for Dirichlet processes is proposed.
We consider a discrete time dynamic stochastic control problem where the noise process is observed but with unknown distribution.
The corresponding formulation of the problem is a blend of online learning and optimal control, for which the Bellman principle and existence of universally measurable selectors are proved.
Our algorithm can also be seen as a new way to construct the augmented state space.
Instead of using posterior distributions as in the existing literature, we recursively update the parameter of the Dirichlet process based on observations of the incoming signal and the resulting sequence is augmented to the state process.
In turn, Borel measurability of the updating rule for the state process is carried out nicely.
This property is essential in our proof for existence of measurable selectors.
Finally, implementation of the approach involves regression/interpolation against measures on the relevant space.
We suggest an approximation of the state space, and a machine learning technique for overcoming the challenge in the numerical example.

There are important drawbacks of the Dirichlet process to be noted. In particular, a sample distribution drawn from the process is discrete with probability one.
Hence, in the resulting inf-integral formulation, the integral is only taken over all discrete probability measures.
It is then worth emphasizing that the proposed methodology is not limited to the Dirichlet process, of which extensions (e.g. \cite{Lo1984}) will apply as well.
In this paper, the Dirichlet process was chosen for the sake of simplicity and illustrative purpose.
Study of nonparametric adaptive Bayesian using random probability measures that sample continuous distributions will be deferred to future work.

\section{Nonparametric Adaptive Bayesian Control Methodology}
\label{sec:method}

In this section, we elaborate on the ideas presented in \cite{Baeuerle2011,KV2015}, and utilize the nonparametric tools introduced in \cite{Ferguson1973,Ferguson1974} to develop a nonparametric Bayesian framework for solving stochastic control problems subjected to Knightian uncertainty.
Towards this end, we begin our presentation with a precise formulation of the problem.

Similar to Section~\ref{sec:review}, fix a finite time horizon $T$ and let $(\Omega,\cF)$ be some measurable space, on which we consider a sequence of $\bR^d$-valued random variables $\{Y_t,\ t=0,\ \ldots,\ T\}$ with its natural filtration $\bF$.
Denote by $U\subset\bR^k$ a compact subset of $\bR^k$.
We assume that there exists an $\bF$-adapted process $\{\varphi_t\}$ which takes values in $U$ and that it plays the role of a control process.
Let $\cU$ be the set of all control processes.
We also consider a noise process $\{Z_t\}$ that is real valued.
For simplicity, we postulate that the sequence is i.i.d.\footnote{We consider $\{Z_t\}$ as an i.i.d. sequence here for illustrative purpose. Of note, our theory also works for more general noise processes as long as a nonparametric Bayesian estimate is feasible.}.
In addition, $\{Z_t\}$ is observed but its true distribution $P_Z$ is unknown.
We describe $\{Y_t\}$ as the state process of some controlled dynamical system, satisfying the following abstract dynamics
\begin{align*}
  Y_{t+1}=f_Y(Y_t,\varphi_t,Z_{t+1}).
\end{align*}
It is further assumed that the function $f_Y:\bR^d\times U\times\bR\to\bR^d$ is continuous.

Denote by $\mathscr{P}(\bR)$ the set of probability measures on $(\bR,\mathscr{B}(\bR))$, where $\mathscr{B}(\bR)$ is the Borel $\sigma$-algebra on $\bR$.
We equip the set $\mathscr{P}(\bR)$ with the Borel $\sigma$-algebra corresponding to the Prokhorov metric.
In this case, continuity of probability measures is equivalent to the weak convergence, and the space $\mathscr{P}(\bR)$ is Polish.
Next, we will recall the definition of Dirichlet process which is the main tool used in this work.

\begin{definition}\label{def:dirichlet}
Let $\alpha$ and $\cD$ be a finite non-null measure and a random probability measure on $(\bR,\mathscr{B}(\bR))$, respectively. We say that $\cD$ is a Dirichlet process with parameter $\alpha$ and write $\cD\in\mathscr{D}(\alpha)$, if for every finite measurable partition $\{B_1,\ldots,B_n\}$ of $\bR$, the random vector $(\cD(B_1),\ldots,\cD(B_n))$ has a Dirichlet distribution with parameter $(\alpha(B_1),\ldots,\alpha(B_n))$.
\end{definition}

It is well-known that the support of $\cD$ with respect to the topology of weak convergence is the set of all distributions on $\bR$ whose supports are contained in the support of $\alpha$.
In this paper, we will always take $\alpha$ as a finite measure with full support.
On the other hand, nonparametric learning of an unknown distribution can be done through a sequence of Dirichlet processes in a Bayesian manner.
To this end, for the unknown distribution $P_Z$, we assign a Dirichlet process $\mathscr{D}(\alpha)$ as its prior distribution.
Let $c_0=\alpha(\bR)$ and $P_0=\alpha/c_0$, we will write that the prior for $P_Z$ is $\mathscr{D}(c_0P)$.

Given the observations $Z_1,\ldots, Z_t$, define the random probability measure
$$
\cP_t=\frac{c_0P_0+\sum_{s=1}^t\delta_{Z_s}}{c_0+t},
$$
where $\delta$ is the Dirac measure, and we know that the posterior for $P_Z$ is Dirichlet process $\sD((c_0+t)\cP_t)$.
Clearly, the sequence of random probability measures $\cP_t$, $t=1,\ldots,T$, can be written in the following recursive way
$$
\cP_t=\frac{(c_0+t-1)\cP_{t-1}+\delta_{Z_t}}{c_0+t}=:f^{c_0}_P(t-1,\cP_{t-1},Z_t), \quad t=1,\ldots,T,
$$
with $\cP_0=P_0$.
In this work, the process $\{\cP_t\}$ will represent the dynamic learning of $P_Z$ as the time-$t$ posterior of $P_Z$ is given as $\sD((c_0+t)\cP_t)$, $t=1,\ldots,T$.

Now, we proceed to formulate the nonparametric adaptive Bayesian control problem.
By adopting a similar idea presented in \cite{Baeuerle2011,KV2015,BCCCJ2019}, we consider the augmented state process $X_t=(Y_t,\cP_t)$, $t=0,\ldots,T$, and the augmented state space
$$
E_X=\bR^d\times\sP(\bR).
$$
In view that both $\bR^d$ and $\sP(\bR)$ are Polish spaces and therefore Borel spaces, the Cartesian product $E_X$ with the product topology is also a Borel space and the Borel $\sigma$-algebra $\cE_X$ coincides with the product $\sigma$-algebra.
The process $\{X_t\}$ has the following dynamics,
$$
X_{t+1} = \mathbf{G}^{c_0}(t,X_t,\varphi_t,Z_{t+1}), \quad t=0,\ldots,T-1,
$$
where $\mathbf{G}^{c_0}$ is defined as
\begin{align}
  \mathbf{G}^{c_0}(t,x,u,z)=\left(f_Y(y,u,z),f^{c_0}_P(t,P,z)\right),
\end{align}
for $x=(y,P)\in E_X$.

Given our assumptions, the process $\{X_t\}$ is $\bF$-adapted and is Markovian.
Therefore, we are essentially dealing with a Markov decision problem.
This leads to the fact that our optimal control at any time $t=0,\ \ldots,\ T-1$, and given any state $x\in E_X$, will be a function of $t$ and $x$.
See Proposition~\ref{pr:selector} and Theorem~\ref{th:bellman} for the justification.
In order to proceed, we present the following technical result regarding the updating rule $\mathbf{G}^{c_0}$ below.

\begin{lemma}\label{lemma:cont}
  For any $t=0,\ldots, T-1$, the mapping $\mathbf{G}^{c_0}(t,\cdot,\cdot,\cdot)$ is continuous.
\end{lemma}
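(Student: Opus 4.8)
The plan is to exploit the product structure of the target space. Since $E_X=\bR^d\times\sP(\bR)$ carries the product topology, a map into $E_X$ is continuous if and only if each of its two coordinate maps is continuous; thus it suffices to treat the two components of $\mathbf{G}^{c_0}(t,\cdot,\cdot,\cdot)$ separately. The first coordinate $(x,u,z)=((y,P),u,z)\mapsto f_Y(y,u,z)$ is the composition of the continuous coordinate projection $(x,u,z)\mapsto(y,u,z)$ with the map $f_Y$, which is continuous by assumption; hence this component is continuous and requires no further work.

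The substance is therefore the continuity of the second coordinate, namely the map
\[
  f^{c_0}_P(t,\cdot,\cdot)\colon \sP(\bR)\times\bR \to \sP(\bR),\qquad (P,z)\mapsto \frac{(c_0+t)P+\delta_z}{c_0+t+1}.
\]
Because $\sP(\bR)$ is Polish (the Prokhorov metric metrizes the weak-convergence topology) and $\bR$ is metrizable, both the domain and the codomain are metric spaces, so continuity is equivalent to sequential continuity. I would therefore fix sequences $P_n\to P$ in $\sP(\bR)$ (weak convergence) and $z_n\to z$ in $\bR$, set $Q_n:=f^{c_0}_P(t,P_n,z_n)$ and $Q:=f^{c_0}_P(t,P,z)$, and show $Q_n\to Q$ weakly.

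To verify weak convergence I would test against an arbitrary bounded continuous $g\colon\bR\to\bR$. Using linearity of the integral,
\[
  \int_\bR g\,dQ_n = \frac{(c_0+t)\int_\bR g\,dP_n + g(z_n)}{c_0+t+1}.
\]
The first term converges to $(c_0+t)\int_\bR g\,dP$ because $P_n\to P$ weakly, and the second converges to $g(z)$ because $z_n\to z$ and $g$ is continuous; hence $\int_\bR g\,dQ_n\to\int_\bR g\,dQ$. Since $g$ was arbitrary, $Q_n\to Q$ weakly, which gives sequential—and hence topological—continuity of $f^{c_0}_P(t,\cdot,\cdot)$, and therefore of $\mathbf{G}^{c_0}(t,\cdot,\cdot,\cdot)$.

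I do not expect a serious obstacle here; the points demanding care are essentially bookkeeping. One must remember that continuity on $\sP(\bR)$ is to be read in the weak-convergence/Prokhorov sense rather than, say, in total variation, and that the affine structure interacts well with it: weak convergence is preserved under the map $(\mu,\nu)\mapsto\lambda\mu+(1-\lambda)\nu$ with fixed weight $\lambda=\tfrac{c_0+t}{c_0+t+1}$, and the Dirac embedding $z\mapsto\delta_z$ is itself weakly continuous, since for bounded continuous $g$ one has $\int_\bR g\,d\delta_{z_n}=g(z_n)\to g(z)$. Folding these two elementary facts together is exactly what the test-function computation above accomplishes, so the metrizability reduction to sequences is the cleanest route.
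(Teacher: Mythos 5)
Your proof is correct, and it follows the same overall skeleton as the paper's (reduce to the measure-updating component $f^{c_0}_P$, fix a convergent sequence $(P_n,z_n)\to(P,z)$, and verify weak convergence of the images), but the key step is carried out differently. The paper invokes the Portmanteau theorem: it picks a Borel set $B$ with $\bigl\{\frac{(c_0+t)P+\delta_z}{c_0+t+1}\bigr\}(\partial B)=0$, observes that this forces $P(\partial B)=0$ and $z\notin\partial B$, deduces $P_n(B)\to P(B)$ and $\delta_{z_n}(B)\to\delta_z(B)$, and then applies Portmanteau a second time to conclude. You instead test directly against an arbitrary bounded continuous $g$ and use linearity of the integral together with $g(z_n)\to g(z)$. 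Your route is the more elementary of the two: it works straight from the definition of weak convergence and sidesteps the (small but real) verification that boundary-null sets of the convex combination are simultaneously boundary-null for $P$ and for $\delta_z$, which the paper asserts without comment. You are also more explicit about the reduction itself --- you justify why continuity of the first coordinate $f_Y$ and of the projection suffices via the product topology on $E_X$, and why sequential continuity is enough (metrizability of both spaces), points the paper leaves implicit with ``it is enough to show.'' Both arguments are complete and essentially equivalent in content; yours is slightly cleaner to check.
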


\begin{proof}
  It is enough to show that $f^{c_0}_P(t,P,z):=\frac{(c_0+t)P+\delta_z}{c_0+t+1}$, $P\in\sP(\bR)$, $z\in\bR$, is continuous with respect to $P$ and $z$ for any fixed $t=0,\ldots,T-1$.

  Assume that $(P_n,z_n)\to (P,z)$ where $P,P_n\in\sP(\bR)$, $z,z_n\in\bR$, $n=1,2,\ldots$.
  Then $P_n\to P$ weakly and $z_n\to z$.
  Take $B\subset\bR$ such that
  $$
  \left\{\frac{(c_0+t)P+\delta_z}{c_0+t+1}\right\}(\partial B)=0.
  $$
  Then, set $B$ satisfies that $P(\partial B)=0$ and $z\notin\partial B$.
  According to Portmanteau~theorem, we have $P_n(B)\to P(B)$ and $\delta_{z_n}(B)\to\delta_z(B)$.
  It is implied that
  $$
  \lim_{n\to\infty}\left\{\frac{(c_0+t)P_n+\delta_{z_n}}{c_0+t+1}\right\}(B)=\left\{\frac{(c_0+t)P+\delta_z}{c_0+t+1}\right\}(B).
  $$
  Continuity of $f^{c_0}_P(t,\cdot,\cdot)$ follows by Portmanteau theorem.
\end{proof}

Lemma~\ref{lemma:cont} shows that $\mathbf{G}^{c_0}(t,\cdot,\cdot,\cdot):E_X\times U\times\bR\to E_X$ is a continuous mapping.
As a result, we obtain the following corollary.

\begin{corollary}\label{co:borel}
  The mapping $\mathbf{G}^{c_0}(t,\cdot,\cdot,\cdot)$ is Borel measurable.
\end{corollary}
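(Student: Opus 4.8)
The plan is to deduce Borel measurability directly from the continuity established in Lemma~\ref{lemma:cont}, invoking the standard topological fact that any continuous map between spaces carrying their Borel $\sigma$-algebras is Borel measurable. Concretely, Lemma~\ref{lemma:cont} asserts that $\mathbf{G}^{c_0}(t,\cdot,\cdot,\cdot):E_X\times U\times\bR\to E_X$ is continuous, and the target space $E_X$ is equipped with its Borel $\sigma$-algebra $\cE_X$, so the only remaining point is the elementary implication ``continuous $\Rightarrow$ Borel measurable.''

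First I would fix the $\sigma$-algebra on the domain. The paper already observes that $E_X=\bR^d\times\sP(\bR)$ is a Borel space whose Borel $\sigma$-algebra coincides with the product $\sigma$-algebra; the same reasoning applies to the domain $E_X\times U\times\bR$, since each factor is separable metrizable (Polish or a compact subset thereof), and for such factors the Borel $\sigma$-algebra of the product topology agrees with the product of the factor Borel $\sigma$-algebras. Thus I may work with the ordinary Borel $\sigma$-algebra generated by the open sets of the product topology on the domain.

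Next I would run the standard generator argument. Because $\mathbf{G}^{c_0}(t,\cdot,\cdot,\cdot)$ is continuous, the preimage of every open subset of $E_X$ is open, hence Borel. Let $\cC$ denote the collection of subsets $A\subseteq E_X$ whose preimage under $\mathbf{G}^{c_0}(t,\cdot,\cdot,\cdot)$ lies in the Borel $\sigma$-algebra of the domain. Since preimages commute with complements and countable unions, $\cC$ is a $\sigma$-algebra, and by the previous sentence it contains all open sets; therefore $\cC$ contains $\cE_X$, the $\sigma$-algebra they generate. This is exactly the statement that $\mathbf{G}^{c_0}(t,\cdot,\cdot,\cdot)$ is $(\cE_X$-to-domain$)$ measurable, i.e.\ Borel measurable.

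There is no genuine obstacle here: the result is a one-line consequence of continuity. The only point worth stating carefully, rather than a difficulty, is the identification of the Borel $\sigma$-algebra of the product domain with the product of the Borel $\sigma$-algebras of its factors, which legitimizes reducing to open generators; this holds by separability and was already invoked for $E_X$ earlier in the text. Accordingly I expect the corollary to follow immediately once continuity is in hand.
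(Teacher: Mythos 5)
Your proposal is correct and matches the paper's (implicit) argument exactly: the paper states the corollary as an immediate consequence of the continuity established in Lemma~\ref{lemma:cont}, which is precisely the ``continuous $\Rightarrow$ Borel measurable'' implication you spell out via the standard generator argument. The extra care you take in identifying the Borel $\sigma$-algebra of the product domain with the product of the factor $\sigma$-algebras is a sound (and, for separable metrizable factors, correct) elaboration of a step the paper leaves unstated.
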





We proceed to define the transition probability for the process $\{X_t\}$.
That is, for any $t=0,\ldots,T-1$, $(x,u)\in E_X\times U$, we define a probability measure on $\cE_X$ as
\begin{align*}
  Q(B|t,x,u;c_0)&=\int\bP\left(\mathbf{G}^{c_0}(t,x,u,Z_{t+1})\in B\right)\pi(d\bP), \quad \pi\in\mathscr{D}(c_tP),
\end{align*}
for any $B\in\cE_X$.
According to \cite[Proposition 4]{Ferguson1973}, we have that
\begin{align}\label{eq:kernel}
  Q(B|t,x,u;c_0)&=P\left(\mathbf{G}^{c_0}(t,x,u,Z_{t+1})\in B\right), \quad x=(y,P).
\end{align}
It view of Lemma~\ref{lemma:cont}, we have that $Q$ is a Borel measurable stochastic kernel on $E_Y$ given $E_Y\times U$.

\begin{proposition}\label{prop:borel-kernel}
For every fixed $t=0,\ldots,T-1$, $Q(\ \cdot\ |t,\cdot,\cdot\ ;c_0)$ is a continuous stochastic kernel on $E_X$ given $E_X\times U$.
\end{proposition}

\begin{proof}
Take any bounded continuous function $g$ on $E_X$, and consider
\begin{align*}
\int_{E_X}g(x')Q(dx'|t,x,u;c_0)=\int_{\bR}g(\mathbf{G}^{c_0}(t,x,u,z))P(dz).
\end{align*}
From Lemma~\ref{lemma:cont}, $g(\mathbf{G}^{c_0}(t,x,u,z))$ is continuous in $(x,u,z)$.
On the other hand, $P$ can be viewed as a continuous stochastic kernel on $\bR$ given $(x,u)$ since it does not depend on $y$ and $u$.
Then, by \cite[Proposition 7.30]{Bertsekas1978},
$$
\int_{\bR}g(\mathbf{G}^{c_0}(t,x,u,z))P(dz)
$$
is continuous in $(x,u)$, and so is
$$
\int_{E_X}g(x')Q(dx'|t,x,u;c_0).
$$
Hence, Portmanteau theorem implies that $Q$ is continuous in $(x,u)$, which means that $Q$ is a continuous, and obviously Borel measurable stochastic kernel on $E_Y$ given $E_Y\times U$.
\end{proof}

To proceed, we take $\cU$ to be the set of all sequences of universally measurable functions on $E_X$.
Then, for any $c_0>0$, $x_0=\left(y_0,P_0\right)\in E_X$, and control process $\{\varphi_t\}\in\cU$, we denote $\varphi^0=(\varphi_0,\ldots,\varphi_{T-1})$ and define the probability measure $\bQ^{\varphi^0}_{c_0,x_0}$ on the canonical space $E_X^{T+1}$:
\begin{align}\label{eq:canonical_measure1}
  \bQ^{\varphi^0}_{c_0,x_0}(B_0\times\cdots\times B_T)=\int_{B_0}\cdots\int_{B_T}\prod_{t=1}^TQ(dx_t|t-1,x_{t-1},\varphi_{t-1}(x_{t-1});c_0)\delta_{\left(y_0,P_0\right)}(dx_0).
\end{align}
Similarly, we define the probability measure $\bQ_{c_0,x_t}^{\varphi^t}$ on the concatenated canonical space $\mathsf{X}_{s=t+1}^TE_X$ by
\begin{align}\label{eq:canonical_measure2}
  \bQ_{c_0,x_t}^{\varphi^t}(B_{t+1}\times\cdots\times B_T)=\int_{B_{t+1}}\cdots\int_{B_T}\prod_{s=t+1}^TQ(dx_s|s-1,x_{s-1},\varphi_{s-1}(x_{s-1});c_0),
\end{align}
where $\varphi^t:=(\varphi_t,\varphi_{t+1},\ldots,\varphi_{T-1})$, and denote by $\cU^t$ the collection of such sequences.

Now, the \emph{nonparametric adaptive Bayesian control} problem is formulated as
\begin{align}\label{eq:np-bayes-cont}
  \inf_{\{\varphi_t\}\in\cU}\bE_{\bQ^{\varphi^0}_{c_0,x_0}}[\ell(Y_T)],
\end{align}
where $\ell$ is a measurable function.
By employing the canonical construction of the augmented process space $E_X^{T+1}$, dynamic learning of the unknown distribution $P_Z$ is carried out along each path of the process $\{X_t\}$.
In a robust framework such as \cite{BCCCJ2019}, the control problem can be seen as a game between the controller and the Knightian adversary.
The nature maximizes the objective function over the set of probability measures on $E^{T+1}_X$, contrary to the controller's intention to minimize across the admissible strategies.
Therefore, the controller is essentially trying to minimize a nonlinear expectation.
In our formulation, the nature assigns weights to all possible models via a Dirichlet process and chooses her strategy as a weighted average of her options, and in accordance, the controller will minimize a linear expectation on the canonical space.

\begin{remark}
  In this work, we consider the Markov decision problem with terminal loss.
  We want to stress that our framework can be easily extended to deal with problems with intermediate costs.
  We can also adjust the Definitions~\eqref{eq:canonical_measure1}, \eqref{eq:canonical_measure2}, and \eqref{eq:np-bayes-cont} by adopting history-dependent controls.
  Then, it can be applied to non-Markov decision problems.
\end{remark}

\section{Solution to the Adaptive Bayesian Control Problem}
\label{sec:sol}

The main result in this section is to prove that problem~\eqref{eq:np-bayes-cont} satisfies the dynamic programming principle.
Hence, it can be solved recursively, and the optimal control will be obtained.
To this end, we consider the associated adaptive Bayesian Bellman equations
\begin{align}\label{eq:bayes-bellman}
  W^{c_0}_T(x) &= \ell(y), \quad x=(y,P)\in E_X,\nonumber\\
  W^{c_0}_t(x) &= \inf_{u\in U}\bE_{P}\left[W^{c_0}_{t+1}(\mathbf{G}^{c_0}(t,x,u,Z_{t+1}))\right], \quad x=(y,P)\in E_X,\ t=0,\ldots,T-1,
\end{align}
and we will show that
$$
\inf_{\{\varphi_t\}\in\cU}\bE_{\bQ^{\varphi^0}_{c_0,x_0}}[\ell(Y_T)]=W^{c_0}_0(x_0).
$$
To proceed, we will first justify, by using Jankov-von Neumann theorem (\cite[Proposition~7.49 - 7.50]{Bertsekas1978}), that universally measurable selectors $\varphi^*_t(x)$, $t=0,\ldots,T-1$, exist for the associated Bellman equations
  \begin{align}\label{eq:epsilon-optimal}
    \dE_P[W^{c_0}_{t+1}(\mathbf{G}^{c_0}(t,x,\varphi^*_t(x),Z_{t+1}))]=
W^{c_0}_t(x),
  \end{align}
for any $t=0,\ \ldots,\ T-1$.

Towards this end, we postulate that the loss function $\ell:\bR\to\bR$ is Borel measurable.
Then, we have the following result.

\begin{proposition}\label{pr:selector}
  The functions $W^{c_0}_t$, $t=T, T-1,\ldots,0$, are lower semianalytic (l.s.a.), and universally measurable optimal selectors $\varphi^{c_0,*}_t(x)$, $t=T-1,\ldots,0$, in \eqref{eq:bayes-bellman} exist.
\end{proposition}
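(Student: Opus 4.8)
The plan is to argue by backward induction on $t$, propagating the property ``$W^{c_0}_t$ is lower semianalytic'' through the Bellman recursion and invoking the measurable-selection framework of \cite{Bertsekas1978} at each stage. Throughout write $x=(y,P)$ and set
\begin{align*}
  h^{c_0}_t(x,u)=\dE_P\left[W^{c_0}_{t+1}(\mathbf{G}^{c_0}(t,x,u,Z_{t+1}))\right],
\end{align*}
so that $W^{c_0}_t(x)=\inf_{u\in U}h^{c_0}_t(x,u)$. For the base case $t=T$, the map $x\mapsto\ell(y)$ is the composition of the continuous (hence Borel) coordinate projection $E_X\to\bR$ with the Borel function $\ell$; thus $W^{c_0}_T$ is Borel measurable and therefore lower semianalytic.

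For the inductive step, assume $W^{c_0}_{t+1}$ is lower semianalytic. By Corollary~\ref{co:borel} the map $\mathbf{G}^{c_0}(t,\cdot,\cdot,\cdot)$ is Borel measurable, and since a lower semianalytic function precomposed with a Borel-measurable map is again lower semianalytic, the integrand $(x,u,z)\mapsto W^{c_0}_{t+1}(\mathbf{G}^{c_0}(t,x,u,z))$ is lower semianalytic on $E_X\times U\times\bR$. The expectation $\dE_P$ is integration in $z$ against the measure $P$, and $(y,P,u)\mapsto P$ is a Borel stochastic kernel because $P\mapsto P(B)$ is Borel measurable on $\sP(\bR)$ for every Borel $B$ (this is precisely the kernel $Q$ of \eqref{eq:kernel}). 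Hence integration preserves lower semianalyticity (\cite[Proposition~7.48]{Bertsekas1978}), so $h^{c_0}_t$ is lower semianalytic on $E_X\times U$. Partial minimization over $u$ then stays within the class (\cite[Proposition~7.47]{Bertsekas1978}), giving that $W^{c_0}_t$ is lower semianalytic and closing the induction for the first assertion.

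For the selectors, I would apply the Jankov--von Neumann theorem in the form of \cite[Proposition~7.49 -- 7.50]{Bertsekas1978} to the lower semianalytic function $h^{c_0}_t$. This at once yields, for every $\varepsilon>0$, a universally measurable $\varepsilon$-optimal selector. To obtain an \emph{exactly} optimal $\varphi^{c_0,*}_t$ satisfying \eqref{eq:epsilon-optimal}, I would pass to the analytic ``argmin'' graph $\{(x,u)\colon h^{c_0}_t(x,u)=W^{c_0}_t(x)\}$ and select from it universally measurably, which is permissible exactly on the set of states where the infimum is attained.

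The hard part is therefore establishing that the infimum over $U$ is attained at \emph{every} state $x$, since lower semianalyticity by itself does not force attainment, even over a compact set. I would secure attainment by combining the compactness of $U\subset\bR^k$ with lower semicontinuity of $u\mapsto h^{c_0}_t(x,u)$: by Lemma~\ref{lemma:cont} the map $u\mapsto\mathbf{G}^{c_0}(t,x,u,z)$ is continuous, so if $W^{c_0}_{t+1}$ is carried through the induction as a lower semicontinuous (not merely lower semianalytic) function, Fatou's lemma upgrades $h^{c_0}_t$ to lower semicontinuity in $u$, and a lower semicontinuous function attains its infimum on the compact set $U$. The argmin graph is then nonempty over all of $E_X$, and \cite[Proposition~7.50]{Bertsekas1978} supplies the universally measurable minimizer. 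Reconciling this lower-semicontinuity requirement with the bare Borel-measurability hypothesis on $\ell$ -- for instance by approximating $\ell$ from below by continuous functions and passing to the limit -- is the delicate point on which the exact-selector claim ultimately rests.
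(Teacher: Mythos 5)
Your induction showing that each $W^{c_0}_t$ is lower semianalytic is essentially the paper's argument: Borel $\ell$ composed with the Borel map $\mathbf{G}^{c_0}$ (Corollary~\ref{co:borel}), integration against the kernel preserving lower semianalyticity, partial minimization over $u$, all via \cite[Lemma~7.30, Propositions~7.47--7.48]{Bertsekas1978}, followed by Jankov--von Neumann for universally measurable $\varepsilon$-optimal selectors. The only substantive divergence is in how exact attainment of the infimum over $U$ is secured, and there you have put your finger on precisely the step the paper passes over. The paper takes $1/n$-optimal analytically measurable selectors $\varphi^{*,n}_{T-1}$, uses compactness of $U$ to extract, for each fixed $x$, a convergent subsequence $\varphi^{*,n_k}_{T-1}(x)\to\tilde u$, asserts without comment that $w_{T-1}(x,\tilde u)=w^*_{T-1}(x)$, and then invokes \cite[Proposition~7.50(b)]{Bertsekas1978} on the resulting everywhere-nonempty argmin set. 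That assertion is valid only if $w_{T-1}(x,\cdot)$ is lower semicontinuous along the subsequence --- exactly the property you identify as needed and correctly observe does not follow from lower semianalyticity (on $U=[0,1]$ take $w(x,0)=1$ and $w(x,u)=u$ for $u>0$: the $1/n$-optimal points converge to $0$, which is not a minimizer).

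So your proposal is not missing an idea that the paper supplies; both arguments rest on the same unestablished lower semicontinuity in $u$ of the partial minimand, and your write-up is simply more candid about leaving it open. As written, neither closes the exact-selector claim for a general Borel $\ell$. The clean repair, consistent with the standing continuity of $f_Y$ and Lemma~\ref{lemma:cont}, is to strengthen the hypothesis on $\ell$ to lower semicontinuous and bounded below (as in the application of Section~\ref{sec:numerics}); Fatou's lemma then propagates lower semicontinuity of $(x,u)\mapsto h^{c_0}_t(x,u)$ through the recursion, the infimum over the compact set $U$ is attained everywhere, and \cite[Proposition~7.50(b)]{Bertsekas1978} delivers the exact universally measurable selector. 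Your alternative of approximating a merely Borel $\ell$ from below by continuous functions cannot rescue the general case: an increasing limit of continuous functions is necessarily lower semicontinuous, and in any event infima and minimizers do not pass through such limits.
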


\begin{proof}
  We will fix $c_0>0$ throughout.
  Note that $\ell$ is Borel measurable, and $\mathbf{G}^{c_0}$ is Borel-measurable according to Lemma~\ref{co:borel}. Thus, $W^{c_0}_T(\mathbf{G}^{c_0}(T-1,\cdot,\cdot,\cdot))$ is Borel measurable and therefore l.s.a. on $E_X\times U$.
  Then, we denote
  $$
  w_{T-1}(x,u)=\dE_{P}\left[W^{c_0}_T(\mathbf{G}^{c_0}(T-1,x,u,Z_T))\right]
  $$
  and Proposition~\ref{prop:borel-kernel} implies that $w_{T-1}$ is an l.s.a. function that maps $E_X\times U$ to $\overline{\bR}$, where $\overline{\bR}$ is the extended real line: $\overline{\bR}=\bR\bigcup\{-\infty,\infty\}$.

  By adopting the notations of \cite[Proposition 7.50]{Bertsekas1978}, we let
  \begin{align*}
    \mathrm{X}&=E_X=\bR\times\sP(\bR), \quad \mathrm{x}=(y,P),\\
    \mathrm{Y}&=U, \quad \mathrm{y}=u,\\
    \mathrm{D}&=E_X\times U,\\
    f(\mathrm{x},\mathrm{y})&=w_{T-1}(y,P,u).
  \end{align*}
  In view of our assumptions, both $\mathrm{X}$ and $\mathrm{Y}$ are Borel spaces. The set $\mathrm{D}$ is Borel and hence analytic. It is trivial to verify that $\text{proj}_\mathrm{X}(D)=E_X$ and $\mathrm{D}_x=U$ for any $x\in E_X$. Define $w^*_{T-1}:E_X\to \overline{\bR}$ by
$$
w^*_{T-1}(x)=\inf_{u\in U}f(\mathrm{x},\mathrm{y}).
$$
Then, Jankov-von Neumann theorem (cf. \cite[Proposition~7.49 - 7.50]{Bertsekas1978}) yields that for any $\varepsilon>0$, there exists an analytically measurable function $\varphi^{*,\varepsilon}_{T-1}: E_X\to U$ satisfying
\begin{align*}
w_{T-1}(x,\varphi^{*,\varepsilon}_{T-1}(x))=
\begin{cases}
w^*_{T-1}(x) +\varepsilon, \quad &\text{if}\ w^*_{T-1}(x)>-\infty,\\
-1/\varepsilon, \quad &\text{if}\ w^*_{T-1}(x)=-\infty.
\end{cases}
\end{align*}
Next, for every positive integer $n$, there exists an analytically measurable function $\varphi^*_n$ such that
\begin{align*}
w_{T-1}(x,\varphi^{*,n}_{T-1}(x))=
\begin{cases}
w^*_{T-1}(x) +\frac{1}{n}, \quad &\text{if}\ w^*_{T-1}(x)>-\infty,\\
-n, \quad &\text{if}\ w^*_{T-1}(x)=-\infty.
\end{cases}
\end{align*}
Since the set $U$ is compact, then for any fixed $x\in E_X$, there is a convergent subsequence $\{\varphi^{*,{n_k}}_{T-1}(x)\}$.
Define $\tilde{\varphi}^{c_0,*}_{T-1}(x)=\lim_{k\to\infty}\varphi^{*,{n_k}}_{T-1}(x)$, and for the fixed $x$ we have $w_{T-1}(x,\tilde{\varphi}^{*,c_0}_{T-1}(x))=w^*_{T-1}(x)$.
Therefore, the set
$$
I = \{x\in E_X\mid \text{for some } u_x\in U,\ w_{T-1}(x,u_x)=w^*_{T-1}(x)\}
$$
coincides with $E_X$.
In view of \cite[Proposition~7.50]{Bertsekas1978} part(b), with slight abuse of notations, there exists a universally measurable function $\varphi^{c_0,*}_{T-1}(x)$ that is the optimal selector.
Moreover, the function $W^{c_0}_{T-1}(x)=w^*_{T-1}(x)$ is l.s.a..
By \cite[Lemma~7.30]{Bertsekas1978}, $W^{c_0}_{T-1}(\mathbf{G}^{c_0}(T-2,\cdot,\cdot,\cdot))$ is l.s.a..
The rest of the proof follows analogously.
\end{proof}

Next, we move on to prove that problem \eqref{eq:np-bayes-cont} can be solved by using the dynamic programming principle. Towards this end, we define the functions
\begin{align*}
    V^{c_0}_t(x,\varphi^t)=&\bE_{\bQ^{\varphi^t}_{c_0,x}}\left[\ell(Y_T)\right], \quad t=0,\ldots,T-1,\\
    V^{c_0,*}_t(x)=&\inf_{\varphi^t\in\cU^t}\bE_{\bQ^{\varphi^t}_{c_0,x}}\left[\ell(Y_T)\right], \quad t=0,\ldots,T-1,\\
    V^{c_0,*}_T(x)=&\ell(y),
\end{align*}
for $x\in E_X$, and $\varphi^t$ which is a sequence of measurable functions.
We provide the following technical result to show the regularity of the functions $V^{c_0}_t$, $t=1,\ldots,T$, so that they can be integrated.
\begin{lemma}\label{lemma:lsa}
For any $t=0,\ldots,T$, and universally measurable sequence $\varphi^t$, the function $V^{c_0}_t(x,\varphi^t)$ is universally measurable.
\end{lemma}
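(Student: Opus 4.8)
The plan is to proceed by backward induction on $t$, from $t=T$ down to $t=0$, exploiting the multiplicative (Ionescu--Tulcea) structure of the measures $\bQ^{\varphi^t}_{c_0,x}$ so that each step reduces to a single integration against the one-step transition kernel $Q$.

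I would first record the one-step disintegration identity. Because $\bQ^{\varphi^t}_{c_0,x}$ is obtained in \eqref{eq:canonical_measure2} by concatenating the Borel kernels $Q(dx_s\mid s-1,x_{s-1},\varphi_{s-1}(x_{s-1});c_0)$, the Fubini theorem for products of stochastic kernels yields, for $t=0,\ldots,T-1$,
\begin{equation*}
  V^{c_0}_t(x,\varphi^t)=\int_{E_X}V^{c_0}_{t+1}(x',\varphi^{t+1})\,Q\bigl(dx'\mid t,x,\varphi_t(x);c_0\bigr),
\end{equation*}
where $x'$ plays the role of $x_{t+1}$: conditioning on $x_{t+1}=x'$ leaves the remaining coordinates distributed as $\bQ^{\varphi^{t+1}}_{c_0,x'}$, whose expectation of $\ell(Y_T)$ is precisely $V^{c_0}_{t+1}(x',\varphi^{t+1})$. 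The base case $t=T$ is immediate, since $V^{c_0}_T(x,\varphi^T)=\ell(y)$ is Borel and hence universally measurable.

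For the inductive step I would assume that $V^{c_0}_{t+1}(\cdot,\varphi^{t+1})$ is universally measurable and deduce the same for $V^{c_0}_t(\cdot,\varphi^t)$ from the displayed identity. The crucial observation is that the integrating kernel $\tilde Q(dx'\mid x):=Q(dx'\mid t,x,\varphi_t(x);c_0)$ is a \emph{universally measurable} stochastic kernel on $E_X$ given $E_X$: for each Borel set $B$, the map $(x,u)\mapsto Q(B\mid t,x,u;c_0)$ is Borel by \eqref{eq:kernel} and Corollary~\ref{co:borel}, while $x\mapsto(x,\varphi_t(x))$ is universally measurable because $\varphi_t$ is; post-composing a universally measurable map by a Borel map preserves universal measurability, so $x\mapsto\tilde Q(B\mid x)$ is universally measurable. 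The integrand $V^{c_0}_{t+1}(\cdot,\varphi^{t+1})$ depends only on $x'$ and is universally measurable by the induction hypothesis, so integrating it against $\tilde Q$ produces a universally measurable function of $x$ by the integration results of \cite{Bertsekas1978} (the universally measurable analogue of \cite[Proposition~7.46]{Bertsekas1978}). To keep the integrals well defined in $\overline{\bR}$ I would run the induction on $\ell^+$ and $\ell^-$ separately, each a nonnegative Borel function whose integral always exists in $[0,\infty]$, and recombine at the end.

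I expect the main obstacle to be the measurability bookkeeping forced by the \emph{universally measurable} kernel $\tilde Q$. Since the control $\varphi_t$ is only universally measurable, one cannot remain in the Borel category after substituting it into $Q$, so the integration step must be justified by the version of the preservation theorem that accommodates universally measurable kernels, together with the closure of universal measurability under post-composition by Borel maps; this is exactly where the abstract machinery of \cite{Bertsekas1978} is needed, as opposed to the elementary Borel theory. A secondary, more routine point is making the one-step disintegration rigorous, i.e. checking that the concatenated construction in \eqref{eq:canonical_measure2} factorizes as stated; this is standard once one notes that every $Q(\cdot\mid s-1,\cdot,\cdot;c_0)$ is a Borel stochastic kernel, so the underlying measure is well defined by Ionescu--Tulcea and Fubini applies.
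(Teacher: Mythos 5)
Your argument is correct and is exactly the route the paper intends: the paper omits the proof entirely, simply citing \cite[Proposition 7.46]{Bertsekas1978}, and your backward induction --- disintegrating $\bQ^{\varphi^t}_{c_0,x}$ one step at a time and integrating the universally measurable $V^{c_0}_{t+1}$ against the universally measurable kernel obtained by substituting the universally measurable $\varphi_t$ into the Borel kernel $Q$ --- is precisely the content of that citation spelled out in full. The additional care in splitting $\ell$ into $\ell^+$ and $\ell^-$ to keep the iterated integrals well defined in $\overline{\bR}$ is a sensible refinement that the paper glosses over.
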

We omit the proof of Lemma~\ref{lemma:lsa} as it follows easily from \cite[Proposition 7.46]{Bertsekas1978}.
Now, with the support of Proposition~\ref{pr:selector} and Lemma~\ref{lemma:lsa}, we present the main result of this section.

\begin{theorem}\label{th:bellman}
  The process $\{\varphi_t^{c_0,*}\}$ constructed from the selectors in Proposition~\ref{pr:selector} is the solution of the nonparametric Bayesian control problem~\eqref{eq:np-bayes-cont}:
  \begin{align}\label{eq:solution}
  \inf_{\{\varphi_t\}\in\cU}\bE_{\bQ^{\varphi^0}_{c_0,x_0}}[\ell(Y_T)]=V^{c_0,*}_0(x_0)=W^{c_0}_0(x_0).
  \end{align}
  Moreover, for any $t=0,\ldots,T-1$, we have
  \begin{align}\label{eq:solution2}
      V^{c_0}_t(x,\varphi^{c_0,*}_t(x))=V^{c_0,*}_t(x)=W^{c_0}_t(x), \quad x\in E_X.
  \end{align}
\end{theorem}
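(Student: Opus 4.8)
The plan is to prove the pair of identities in \eqref{eq:solution2} by backward induction on $t$, and then recover \eqref{eq:solution} by specializing to $t=0$. The proposition I would carry through the induction is the conjunction: for every $x\in E_X$, $V^{c_0,*}_t(x)=W^{c_0}_t(x)$, and the tail of optimal selectors attains this value, i.e. $V^{c_0}_t(x,\bar\varphi^t)=V^{c_0,*}_t(x)$, where $\bar\varphi^t:=(\varphi^{c_0,*}_t,\ldots,\varphi^{c_0,*}_{T-1})$ is built from the selectors of Proposition~\ref{pr:selector}. The base case $t=T$ is immediate from the definitions $V^{c_0,*}_T(x)=\ell(y)=W^{c_0}_T(x)$.

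The engine of the induction is a one-step recursion for the cost functional. Fix a universally measurable tail $\varphi^t=(\varphi_t,\varphi^{t+1})$. Inspecting the product structure of the canonical measure $\bQ^{\varphi^t}_{c_0,x}$ in \eqref{eq:canonical_measure2}, its first factor is the kernel $Q(dx_{t+1}\mid t,x,\varphi_t(x);c_0)$, while the conditional law of the remaining coordinates given $x_{t+1}$ is exactly $\bQ^{\varphi^{t+1}}_{c_0,x_{t+1}}$. Disintegrating and inserting the kernel representation \eqref{eq:kernel} yields
$$V^{c_0}_t(x,\varphi^t)=\bE_P\left[V^{c_0}_{t+1}\left(\mathbf{G}^{c_0}(t,x,\varphi_t(x),Z_{t+1}),\varphi^{t+1}\right)\right],\qquad x=(y,P),$$
the expectation being over $Z_{t+1}\sim P$. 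Lemma~\ref{lemma:lsa} guarantees that $V^{c_0}_{t+1}(\cdot,\varphi^{t+1})$ is universally measurable, so the inner integral is well defined and the iterated integral may legitimately be rearranged.

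With this recursion the inductive step splits into two inequalities. For the lower bound, any admissible tail satisfies $V^{c_0}_{t+1}(\cdot,\varphi^{t+1})\ge V^{c_0,*}_{t+1}(\cdot)=W^{c_0}_{t+1}(\cdot)$ pointwise, the equality being the induction hypothesis; substituting into the recursion and bounding below by the infimum over $u\in U$ gives $V^{c_0}_t(x,\varphi^t)\ge W^{c_0}_t(x)$, hence $V^{c_0,*}_t(x)\ge W^{c_0}_t(x)$ after taking the infimum over $\varphi^t\in\cU^t$. For the upper bound, I would feed the tail $\bar\varphi^t$ of optimal selectors into the recursion; by the induction hypothesis the continuation term equals $W^{c_0}_{t+1}$, and since $\varphi^{c_0,*}_t$ attains the infimum in \eqref{eq:bayes-bellman}–\eqref{eq:epsilon-optimal}, the right-hand side collapses to $W^{c_0}_t(x)$. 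Thus $V^{c_0,*}_t(x)\le V^{c_0}_t(x,\bar\varphi^t)=W^{c_0}_t(x)$, closing both the value identity and the attainment claim.

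The principal obstacle is not the algebra of these two inequalities but the measurability bookkeeping that makes every expectation above legitimate. Because the selectors are only universally measurable and the value functions only lower semianalytic, the disintegration of $\bQ^{\varphi^t}_{c_0,x}$ and the interchange of integrations must be justified inside the analytic/universally measurable framework rather than the Borel one; here I would rely on the stability of lower semianalytic functions under integration against a Borel stochastic kernel and on the composition rules for universally measurable maps (cf. \cite[Propositions~7.46 and 7.48]{Bertsekas1978}), exactly as in the proof of Proposition~\ref{pr:selector}. A secondary point is to check that all integrands are quasi-integrable so that the extended-real-valued expectations are unambiguous; this again follows from the l.s.a. structure together with the conventions of \cite{Bertsekas1978} for $\pm\infty$.
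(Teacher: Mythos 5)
Your argument is correct, and its skeleton coincides with the paper's: backward induction, the disintegration of $\bQ^{\varphi^t}_{c_0,x}$ into the one-step kernel $Q(\cdot\mid t,x,\varphi_t(x);c_0)$ and the continuation measure $\bQ^{\varphi^{t+1}}_{c_0,x'}$, and the two inequalities $V^{c_0,*}_t\ge W^{c_0}_t$ and $V^{c_0,*}_t\le W^{c_0}_t$; the lower bound is argued identically in both. Where you genuinely diverge is the upper bound. The paper inserts an $\varepsilon$-optimal continuation strategy $\varphi^{t+1,\varepsilon}$, derives $V^{c_0,*}_t(x)\le W^{c_0}_t(x)+\varepsilon$, lets $\varepsilon$ be arbitrary, and only afterwards asserts that the attainment identity \eqref{eq:solution2} ``follows immediately.'' You instead strengthen the induction hypothesis to include attainment by the tail $\bar\varphi^{t+1}$ of exact selectors, substitute that tail into the one-step recursion, and use \eqref{eq:epsilon-optimal} to collapse the result to $W^{c_0}_t(x)$. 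Your variant is available precisely because Proposition~\ref{pr:selector} supplies exact (not merely $\varepsilon$-) optimal universally measurable selectors, and it buys a cleaner payoff: \eqref{eq:solution2} emerges as part of the induction rather than as an unproved afterthought. The paper's $\varepsilon$-argument is the more robust template in settings where only $\varepsilon$-optimal measurable selectors exist, but here it is strictly weaker bookkeeping for the same conclusion. Your measurability remarks (stability of lower semianalytic functions under integration against Borel kernels, composition rules for universally measurable maps, quasi-integrability of the extended-real-valued integrands) are exactly the points the paper delegates to Lemma~\ref{lemma:lsa} and \cite[Proposition 7.46]{Bertsekas1978}, so nothing is missing.
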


\begin{proof}
  We prove the result via backward induction in $t=T,\ldots,0$:
  $$
  V^{c_0,*}_t(x)=W^{c_0}_t(x), \quad x\in E_X.
  $$
  First, it is clear that $V^{c_0,*}_T(x)=W^{c_0}_T(x)$, for $x\in E_X$.
  Take $t=T-1$, we have
  \begin{align*}
    V^{c_0,*}_{T-1}(x)&=\inf_{\varphi^{T-1}\in \cU^{T-1}}\bE_{\bQ_{c_0,x}^{\varphi^{T-1}}}[\ell(Y_T)]\\
    &=\inf_{u\in U}\bE_{P}[\ell(\mathbf{G}^{c_0}(T-1,x,u,Z_T))]\\
    &=\inf_{u\in U}\bE_{P}[W^{c_0}_T(\mathbf{G}^{c_0}(T-1,x,u,Z_T))]=W^{c_0}_{T-1}(x_{T-1}), \quad x=(y,P)\in E_X.
  \end{align*}
  Hence, the function $V^{c_0,*}_{T-1}$ is l.s.a., and moreover it is universally measurable.
  For $t=T-2,\ldots,1,0$, assume that $V^{c_0,*}_{t+1}$ is l.s.a..
  Given a universally measurable function $\varphi_t$, the stochastic kernel $Q(\cdot\mid t,x,\varphi_t(x);c_0)$ is universally measurable on $E_X$ given $E_X$.
  Therefore, the following integrals
  \begin{align*}
  \int_{E_X}\bE_{\bQ^{\varphi^{t+1}}_{c_0,x'}}[\ell(Y_T)]Q(dx'|t,x,\varphi_t(x);c_0)
  \end{align*}
  and
  \begin{align*}
      \int_{E_X}V^{c_0,*}_{t+1}(x')Q(dx'|t,x,\varphi_t(x);c_0)
  \end{align*}
  are well defined, where the first one is justified by Lemma~\ref{lemma:lsa}.
  We have by induction
  \begin{align*}
    V^{c_0,*}_t(x)&=\inf_{\varphi^t=(\varphi_t,\varphi^{t+1})\in \cU^t}\int_{E_X}\bE_{\bQ^{\varphi^{t+1}}_{c_0,x'}}[\ell(Y_T)]Q(dx'|t,x,\varphi_t(x);c_0)\\
    &\geq\inf_{\varphi^t=(\varphi_t,\varphi^{t+1})\in \cU^t}\int_{E_X}\inf_{\varphi^{t+1}\in \cU^{t+1}}\bE_{\bQ^{\varphi^{t+1}}_{c_0,x'}}[\ell(Y_T)]Q(dx'|t,x,\varphi_t(x);c_0)\\
    &=\inf_{\varphi^t=(\varphi_t,\varphi^{t+1})\in \cU^t}\int_{E_X}V^{c_0,*}_{t+1}(x')Q(dx'|t,x,\varphi_t(x);c_0)\\
    &=\inf_{u\in U}\int_{E_X}W^{c_0}_{t+1}(x')Q(dx'|t,x,u;c_0)=W^{c_0}_t(x).\\
  \end{align*}
  Next, fix $\varepsilon>0$, for any $x\in E_Y$, let $\varphi^{t+1,\varepsilon}$ be the $\varepsilon$-optimal control process at time $t+1$, namely,
  $$
  \bE_{\bQ^{\varphi^{t+1,\varepsilon}}_{c_0,x}}[\ell(Y_T)]\leq \inf_{\varphi^{t+1}\in\cU^{t+1}}\bE_{\bQ_{c_0,x}^{\varphi^{t+1}}}[\ell(Y_T)]+\varepsilon.
  $$
  We know that $\varphi^{t+1,\varepsilon}$ exists as showed in Proposition~\ref{pr:selector}.
  Then, we obtain that
  \begin{align*}
    V^{c_0,*}_t(x)&=\inf_{\varphi^t=(\varphi_t,\varphi^{t+1})\in \cU^{t}}\int_{E_X}\bE_{\bQ^{\varphi^{t+1}}_{c_0,x'}}[\ell(Y_T)]Q(dx'|t,x,\varphi_t(x);c_0)\\
    &\leq\inf_{\varphi^t=(\varphi_t,\varphi^{t+1}\in \cU^t}\int_{E_X}\bE_{\bQ^{\varphi^{t+1,\varepsilon}}_{c_0,x'}}[\ell(Y_T)]Q(dx'|t,x,\varphi_t(x);c_0)\\
    &\leq\inf_{u\in U}\int_{E_X}W^{c_0}_{t+1}(x')Q(dx'|t,x,\varphi_t(x);c_0)+\varepsilon=W^{c_0}_t(x)+\varepsilon.
  \end{align*}
  Because $\varepsilon$ is arbitrary, we conclude that $W^{c_0}_t(x)=V^{c_0}_t(x)$, and such equality holds true for all $t=T-1,\ldots,0$.
  Equality \eqref{eq:solution2} follows immediately.
\end{proof}

\begin{section}{Application: Nonparametric Adaptive Bayesian Utility Maximization}
\label{sec:numerics}

  In this section we demonstrate our method in the context of a dynamic optimal portfolio selection problem.
Consider a market model consists of a risk-free asset with a constant interest rate $r$, and a risky asset $\{S_t\}$ with the corresponding log-return from time $t$ to $t+1$ denoted by $Z_{t+1}=\log(S_{t+1}/S_t)$.
The dynamics of the wealth process $\{Y_t\}$ in the market produced by a self-financing trading strategy is given by
\begin{align}\label{eq:valueprocess}
Y_{t+1}=Y_t(1+r+\varphi_t(e^{Z_{t+1}}-1-r)), \quad t=0,\ldots,T-1,
\end{align}
with initial wealth $Y_0=y_0$.
Hence, the function $f_Y(y,u,z)=y(1+r+u(e^z-1-r))$.
Above $\varphi_t\in U=[0,1]$ is the proportion of the portfolio wealth invested in the risky asset from time $t$ to $t+1$.
In this setup, the wealth process remains non-negative.
We postulate that $Z_t,\ t=1,\ldots,T$, form an i.i.d. sequence of random variables.
Both processes $\{Y_t\}$ and $\{Z_t\}$ are assumed to be observed and in particular, the true distribution of $\{Z_t\}$ is unknown.
Denote by $\bF$ the natural filtration generated by $\{Y_t\}$, and we consider $\{\varphi_t\}$ as an $\bF$-adapted process.
The prior on the distribution of $Z_i$ is chosen to be $\mathscr{D}(c_0A_0)$, where $c_0$ is some positive real number and $A_0$ is a probability measure on $\bR$ with full support.
Consider the loss function in the form: $\ell(x)=\frac{1-x^{1-\eta}}{1-\eta}$ with $\eta>1$.
Note that such function is bounded from below.
Then the adaptive Bayesian control problem at hand is
\begin{align}\label{eq:example1}
  \inf_{\{\varphi_t\}\in\cU}\bE_{\bQ^{\varphi^0}_{c_0,x_0}}\left[\frac{1-Y_T^{1-\eta}}{1-\eta}\right],
\end{align}
where $x_0=(y_0,A_0)$.
Such problem is in fact equivalent to the one as follows
\begin{align}\label{eq:example2}
  \sup_{\{\varphi_t\}\in\cU}\bE_{\bQ^{\varphi^0}_{c_0,x_0}}\left[\frac{Y_T^{1-\eta}-1}{1-\eta}\right].
\end{align}
Therefore, we are maximizing a CRRA utility of the terminal wealth with high risk aversion coefficient.
The corresponding Bellman equations are then written as
\begin{align}
  W^{c_0}_T(x)&=\frac{y^{1-\eta}-1}{1-\eta},\label{eq:example2-1}\\
  W^{c_0}_t(x)&=\sup_{u\in U}\bE_P\left[W^{c_0}_{t+1}(\mathbf{G}^{c_0}(t,x,u,Z_{t+1}))\right],\label{eq:example2-2}\\
  x&=(y,P)\in E_X,\ u\in U,\ t=0,\ \ldots,\ T-1.\nonumber
\end{align}

Note that the main challenge in applying the nonparametric adaptive Bayesian method and solving \eqref{eq:example2-1} - \eqref{eq:example2-2} is that we need to regress against probability measures.
Indeed, when numerically computing $\bE_P\left[W^{c_0}_{t+1}(\mathbf{G}^{c_0}(t,x,u,Z_{t+1}))\right]$, we will estimate its value through Monte Carlo simulations and therefore interpolation/extrapolation is required to evaluate $W_{t+1}(\cdot)$.
In view of such difficulty, the strategy we propose is to regress against the first $m$ moments of the posterior probability measures instead of against the measures themselves (cf. see \ref{sec:ml} below for more discussion).
Practically, we will face a high dimensional optimization problem where traditional grid-based method is extremely inefficient or impossible.
To this end, we will employ the new machine learning algorithm proposed in \cite{CL2019} that has sound scalability and overcomes the challenges in solving our high dimensional stochastic control problem.
To briefly summarize our numerical algorithm: we will employ the regression Monte Carlo (RMC) paradigm and the Gaussian process (GP) surrogates to recursively compute the optimal strategy $\{\varphi_t\}$ backward in time.
The detailed description is presented in the following section.

\subsection{Machine Learning Algorithm}\label{sec:ml}

The main purpose of this section is to propose a numerical solver for \eqref{eq:example2-1} - \eqref{eq:example2-2} in the same spirit of \cite{CL2019}.
We begin with discretization of the state space and we employ the RMC method to create a stochastic (non-gridded) mesh for the underlying state process.
We will also explain how to handle the issue of regressing against probabilities measures along the way.

One difficulty in discretizing the state space is that the state process $\{Y_t\}$ depends on the unknown control which prevents the direct simulation of $\{Y_t\}$ when we apply the RMC paradigm. Hence, we use the idea of control randomization by generating the values $\widetilde{u}^1_t,\ldots,\widetilde{u}^N_t$, $t=0,1,\ldots,T-1$, uniformly in the set $U$ along each of the $N$ sample paths. We also simulate the return process $\{Z_t\}$ according to some sampling measure and obtain $\widetilde{Z}^1_t,\ldots,\widetilde{Z}^N_t$, $t=1,\ldots,T$.
For each path, we choose the initial wealth $\widetilde{y}^i_0$ and the parameter for Dirichlet prior $\alpha=c_0\widetilde{P}^i_0$, $i=1,\ldots,N$.
Here $c_0$ is some postive real number and $\widetilde{P}^i_0$, $i=1,\ldots,N$, are some probability distributions with full support on $\bR$.
Then, along each sample path, the processes $\{Y_t\}$ and $\{A_t\}$ will be updated according to the mapping $\mathbf{G}^{c_0}$ by using the simulated values $\widetilde{u}^i_t$ and $\widetilde{Z}^i_{t+1}$, $i=1,\ldots,N$, $t=0,\ldots,T-1$, and the sample sites $\widetilde{x}^i_t=(\widetilde{y}^i_t, \widetilde{P}^i_t)$, $i=1,\ldots,N$, $t=0,\ldots,T$ will be obtained.

When applying the dynamic programming to solve \eqref{eq:example2-2} at the sampled sites $\widetilde{x}^i_{t-1}$, $i=1,\ldots,N$, $t=1,\ldots,T-1$, we need to approximate the values of functions $W_{t}(\mathbf{G}^{c_0}(t-1,\widetilde{x}^i_{t-1},\cdot,\cdot))$, $i=1,\ldots,N$, $t=1,\ldots,T-1$.
Therefore, a regression model for $W_t$ is needed.
The difficulty in constructing such a model is twofold.
First, part of the state variable $A_t$ is a probability measure which is essentially an infinite dimensional variable.
In the regression, we will approximate the variable by its first $M$ moments and regress against the vector of such moments instead.
To put it in a different way, we approximate the state space $E_X=\{(y,P)\}$ as $\widetilde{E}_X=\{(y,m^1_P,\ldots,m^M_P)\}$ where $m^i_P$ is the $i$th moment of the probability measure $P$.
With slight abuse of notations, we will use $\widetilde{x}^i_t$, $i=1,\ldots,N$, to denote the $N$ sample sites in the space $\widetilde{E}_X$.
Second, although we reduce the dimension of the regression problem from infinity to $1+M$, by implementing this approximation.
It still leads to a significant high dimensional optimization problem.
Hence, a numerical approach with good scalability is crucial for which we will follow the methods of \cite{CL2019} by constructing nonparametric approximations of value functions $W_t$, $t=1,\ \ldots,\ T-1$, via GP surrogates.

To be more specific, we consider a regression model $\widetilde{W}_t$ of $W^{c_0}_t$ such that for any set of inputs the corresponding values of $\widetilde{W}_t$ are jointly normal distributed.
Given training data $(\widetilde x^i_t, W^{c_0}_t(\widetilde x^i_t))$, $i=1,\ \ldots,\ N$, for any $\widetilde x\in \widetilde{E}_X$, the predicted value $\widetilde{W}_t(\widetilde x)$ is computed as
$$
\widetilde{W}_t(\widetilde x)=(k(\widetilde x,\widetilde x^1_t),\ldots,k(\widetilde x,\widetilde x^N_t))[\mathbf{K}+\epsilon^2\mathbf{I}]^{-1}(W_t(\widetilde x^1),\ldots,W_t(\widetilde x^N))^T,
$$
where $\mathbf{I}$ is the $N\times N$ identity matrix and entries of $\mathbf{K}$ has the form $\mathbf{K}_{i,j}=k(\widetilde x^i_t,\widetilde x^j_t)$, $i,\ j=1,\ \ldots,\ N$.
The function $k(\cdot,\cdot)$ is the kernel function of the GP model and in this project, we choose the Matern-5/2 family.
Through estimating the hyperparameters inside $k(\cdot,\cdot)$, we fit the GP surrogate $\widetilde{W}_t$ and use it in \eqref{eq:example2-2} to compute $\widetilde{W}_{t-1}$.
The overall algorithm is as follows:
\begin{enumerate}
  \item (Assume that $W^{c_0}_{t}(\cdot)$ and $\varphi^{c_0,*}_{t}(\cdot)$ are computed at sampled points, and the GP surrogates $\widetilde{W}_{t+1}$ and $\widetilde{\varphi}_{t+1}$ are fitted.)
  \item For time $t$, any $u\in U$ and each of the sample sites $\{\widetilde x^i_{t-1},\ i=1,\ \ldots,\ N\}\subset\widetilde{E}_X$, use Monte Carlo simulation to approximate
      $$
      \bE_{P}[W^{c_0}_{t}(\mathbf{G}^{c_0}(t-1,\widetilde x^i_{t-1},u,Z_{t}))
      $$
      as the following sum
      $$
      \widehat{W}_{t-1}(\widetilde x^i_{t-1},u)\approx \frac{1}{L}\sum_{j=1}^{L}\widetilde{W}_{t-1}(\mathbf{G}^{c_0}(t-1,\widetilde x^i_{t-1},u,Z^j))
      $$
      where $Z^1,\ \ldots,\ Z^L$ are generated from the distribution $P$.
  \item Solve the optimization problem $W^{c_0}_{t-1}(\widetilde x^i_{t-1})=\sup_{u\in U}\widehat{W}_{t-1}(\widetilde x^i_{t-1},u)$ and obtain the maximizer $\varphi^{c_0,*}_t(\widetilde x^i_{t-1})$, $i=1,\ldots,N$.
  \item Fit the GP surrogates for $\widetilde W_{t-1}$ and $\widetilde \varphi^*_{t-1}$ by using the training data $(\widetilde x^i_{t-1},W^{c_0}_{t-1}(\widetilde x^i_{t-1}))$ and $(\widetilde x^i_{t-1},\varphi^{c_0,*}_{t-1}(\widetilde x^i_{t-1}))$, $i=1,\ldots,N$, respectively.
  \item Goto 1: start the next recursion for $t-2$.
\end{enumerate}
To analyze the performance of the optimal control computed from our algorithm, we generate $N'$ out-of-sample paths as follows.
We first simulate the random noise $Z^i_t$, $i=1,\ldots,N'$, $t=1,\ldots,T$ from the sampling measure, and pick $x^i_0\equiv(y_0,A_0)\in E_X$, $i=1,\ldots,N'$.
For each $x^i_t=(y^i_t,A^i_t)$, compute the first $M$ moments $(m^1_{A^i_t},\ldots,m^M_{A^i_t})$ of $A^i_t$, and use the GP surrogate to estimate the corresponding optimal strategy as $\widetilde{\varphi}_t(x^i_t)=\widetilde\varphi^*_t(y^i_t, m^1_{A^i_t},\ldots,m^M_{A^i_t})$.
Then we update the state process according to $x^i_{t+1}=\mathbf{G}^{c_0}(t,x^i_t,\widetilde{\varphi}_t(x^i_t),Z^i_{t+1})$, $i=1,\ \ldots,\ N'$.
Finally, the expected value of the terminal utility is estimated as $\frac{1}{N'}\sum_{i=1}^{N'}\frac{(y^i_T)^{1-\eta}-1}{1-\eta}$.

\subsection{Other Stochastic Control Methodologies under Model Uncertainty}
We will compare the performance of the adaptive Bayesian strategy with the performance of strategies computed from two other classical stochastic control frameworks under model uncertainty: strong robust and time consistent adaptive.

The main purpose of the comparison is to show the advantage of the nonparametric adaptive Bayesian approach to control methods that assume a parametric model for the underlying random noise.
Typically, when an equity investor deals with model uncertainty, she assumes that the log-returns of the underlying stock across the trading periods are i.i.d. normal random variables with unknown mean $\mu$ and variance $\sigma^2$.
Then, to apply the strong robust approach, the invester constructs a confidence region $C:=\tau(t_0,\hat{\mu}_0,\hat{\sigma}^2_0)$ for $(\mu,\sigma^2)$ based on historical observations, where $t_0$ is the sample size of the historical data and $(\hat\mu_0,\hat\sigma^2_0)$ is the estimator of the unknown parameters $(\mu,\sigma^2)$ based on such data.
Next, she computes the optimal strong robust strategies by solving the following Bellman equations:
\begin{align}
W^{\text{sr}}_T(y_T)&=\frac{y_T^{1-\eta}-1}{1-\eta},\nonumber\\
W^{\text{sr}}_t(y_t)&=\sup_{u\in U}\inf_{(\mu,\sigma^2)\in C}\bE_{\mu,\sigma^2}[W^{\text{sr}}_{t+1}(f_Y(y_t,u,Z_{t+1})], \quad t=0,\ldots,T-1,\label{eq:bellman_sr}
\end{align}
where $\bE_{\mu,\sigma^2}$ denotes the expectation computed corresponding to $(\mu,\sigma^2)$.

The idea of the strong robust method is to find the worst-case parameters $(\underline{\mu}_t(y_t,u),\underline{\sigma}^2_t(y_t,u))\in C$ as measurable functions of the state $y_t$ and the trading strategy $u$, then search for the optimal strategy $\varphi^{*,\text{sr}}_t(y_t)$ that performs the best under its corresponding worst-case model. For more details about the strong robust methodology, please refer to the study in \cite{Sirbu2014,BCP2016}.

\begin{table}[ht]
\centering
\renewcommand{\arraystretch}{1.3}
\begin{tabular}{c|ccccccc}
\hline
 \multicolumn{1}{c}{ }& \multicolumn{7}{||c}{$\hat\mu_0=4.615\times10^{-3}$, $\hat\sigma_0=5.609\times10^{-2}$} \\
 \cline{2-8}
 \multicolumn{1}{c}{ }& \multicolumn{5}{||c}{AB} & \multicolumn{1}{|c}{SR} & \multicolumn{1}{|c}{AD} \\
  \multicolumn{1}{c}{ }& \multicolumn{1}{||c}{$c_0=1$} & $c_0=5$ & $c_0=10$ & \multicolumn{1}{c}{$c_0=20$} & \multicolumn{1}{c}{$c_0=30$} & \multicolumn{1}{|c}{ } & \multicolumn{1}{|c}{ }  \\
  \hline
   mean($W$)  & \multicolumn{1}{||c}{1.8037} & 1.8036 & 1.8036  & 1.8035 & 1.8034 & \multicolumn{1}{|c}{1.8020} & \multicolumn{1}{|c}{1.8026}\\
 var($W$)   & \multicolumn{1}{||c}{4.295e-4} & 4.835e-4 & 5.421e-4 & 6.187e-4 & 6.653e-4 & \multicolumn{1}{|c}{4.917e-14} & \multicolumn{1}{|c}{1.162e-3}\\
 $q_{0.30}(W)$  & \multicolumn{1}{||c}{1.7919} & 1.7915 & 1.7891 & 1.7872 & 1.7849 & \multicolumn{1}{|c}{1.8020} & \multicolumn{1}{|c}{1.7841}\\
 $q_{0.90}(W)$  & \multicolumn{1}{||c}{1.8352} & 1.8379 & 1.8405 & 1.8425 & 1.8485 & \multicolumn{1}{|c}{1.8020} & \multicolumn{1}{|c}{1.8483}\\
 $\text{max}(W)$   & \multicolumn{1}{||c}{1.8721} & 1.8704 & 1.8720 & 1.8718 & 1.8656 & \multicolumn{1}{|c}{1.8020} & \multicolumn{1}{|c}{1.8783}\\
 $\text{min}(W)$   & \multicolumn{1}{||c}{1.7711} & 1.7576 & 1.7536 & 1.7632 & 1.7647 & \multicolumn{1}{|c}{1.8020} & \multicolumn{1}{|c}{1.7123}\\
 \hline
\end{tabular}
\bigskip
\caption{Mean, variance, 30\%-quantile, 90\%-quantile, maximum, and minimum of the out-of-sample terminal utility for the AB, SR and AD methods;  Case~1-1.}
\label{table:case1-1}
\end{table}

\begin{table}[ht]
\centering
\renewcommand{\arraystretch}{1.3}
\begin{tabular}{c|ccccccc}
\hline
 \multicolumn{1}{c}{ }& \multicolumn{7}{||c}{$\hat\mu_0=-3.987\times10^{-3}$, $\hat\sigma_0=6.288\times10^{-2}$} \\
 \cline{2-8}
 \multicolumn{1}{c}{ }& \multicolumn{5}{||c}{AB} & \multicolumn{1}{|c}{SR} & \multicolumn{1}{|c}{AD} \\
  \multicolumn{1}{c}{ }& \multicolumn{1}{||c}{$c_0=1$} & $c_0=5$ & $c_0=10$ & \multicolumn{1}{c}{$c_0=20$} & \multicolumn{1}{c}{$c_0=30$} & \multicolumn{1}{|c}{ } & \multicolumn{1}{|c}{ }  \\
  \hline
   mean($W$)  & \multicolumn{1}{||c}{1.8043} & 1.8038 & 1.8041  & 1.8043 & 1.8038 & \multicolumn{1}{|c}{1.8020} & \multicolumn{1}{|c}{1.8016}\\
 var($W$)   & \multicolumn{1}{||c}{4.092e-4} & 3.350e-4 & 3.356e-4 & 2.701e-4 & 1.768e-4 & \multicolumn{1}{|c}{4.917e-14} & \multicolumn{1}{|c}{2.020e-5}\\
 $q_{0.30}(W)$  & \multicolumn{1}{||c}{1.7940} & 1.7952 & 1.7959 & 1.7972 & 1.7981 & \multicolumn{1}{|c}{1.8020} & \multicolumn{1}{|c}{1.8006}\\
 $q_{0.90}(W)$  & \multicolumn{1}{||c}{1.8362} & 1.8334 & 1.8334 & 1.8262 & 1.8256 & \multicolumn{1}{|c}{1.8020} & \multicolumn{1}{|c}{1.8050}\\
 $\text{max}(W)$   & \multicolumn{1}{||c}{1.8702} & 1.8626 & 1.8639 & 1.8598 & 1.8590 & \multicolumn{1}{|c}{1.8020} & \multicolumn{1}{|c}{1.8146}\\
 $\text{min}(W)$   & \multicolumn{1}{||c}{1.7594} & 1.7638 & 1.7574 & 1.7665 & 1.7801 & \multicolumn{1}{|c}{1.8020} & \multicolumn{1}{|c}{1.7715}\\
 \hline
\end{tabular}
\bigskip
\caption{Mean, variance, 30\%-quantile, 90\%-quantile, maximum, and minimum of the out-of-sample terminal utility for the AB, SR and AD methods;  Case~1-2.}
\label{table:case1-2}
\end{table}

Another approach to deal with model uncertainty is the non-robust adaptive approach based on ``learning''.
Loosely speaking, the investor adapts to her latest belief about the unknown parameters, which learned as the point estimator, and takes actions based on such belief.
At any fixed time point, the controller Typically finds the optimal strategy by solving the Bellman equations according to the current parameter estimate across the remaining timeline.
Such treatment bears the drawback that the computed strategy is time inconsistent: the controller knows that she will change her view about the unknown parameter at all future time points, the inevitable future changes are not taken into consideration in the computation of the strategy.

Modifications can be made to obtain a time consistent adaptive control framework.
In the context of our investment problem, one will solve the following Bellman equations
\begin{align}
W^{\text{ad}}_T(y_T,\hat{\mu}_T,\hat{\sigma}^2_T)&=\frac{y_T^{1-\eta}-1}{1-\eta},\nonumber\\
W^{\text{ad}}_t(y_t,\hat{\mu}_t,\hat{\sigma}^2_t)&=\sup_{u\in U}\bE_{\hat{\mu}_t,\hat{\sigma}^2_t}[W^{\text{ad}}_{t+1}(f_Y(y_t,u,Z_{t+1}),f_{\Theta}(t,\hat{\mu}_t,\hat{\sigma}^2_t,Z_{t+1})],\label{eq:bellman_ad}\\
h(t,\hat{\mu}_t,\hat{\sigma}^2_t,Z_{t+1})&=\left(\frac{t\hat{\mu}_t+Z_{t+1}}{t+1},\frac{t(t+1)\hat\sigma^2_t+t(\hat\mu_t-Z_{t+1})^2}{(t+1)^2}\right),\nonumber
\end{align}
for $t=0,\ldots,T-1$. The role of the function $f_{\Theta}$ is to update the estimators $\hat\mu_t$ and $\hat\sigma^2_t$ based on new observation $Z_{t+1}$.
Recall our earlier discussion about the adaptive robust control approach, and note that the above method is a special case of the adaptive robust by replacing the set $\tau(t,\hat\mu_t,\hat\sigma^2_t)$ with the singleton $\{(\hat\mu_t,\hat\sigma^2_t)\}$.

\begin{figure}
\centering
\begin{tabular}{cc}
\includegraphics[page=1,width=0.48\textwidth]{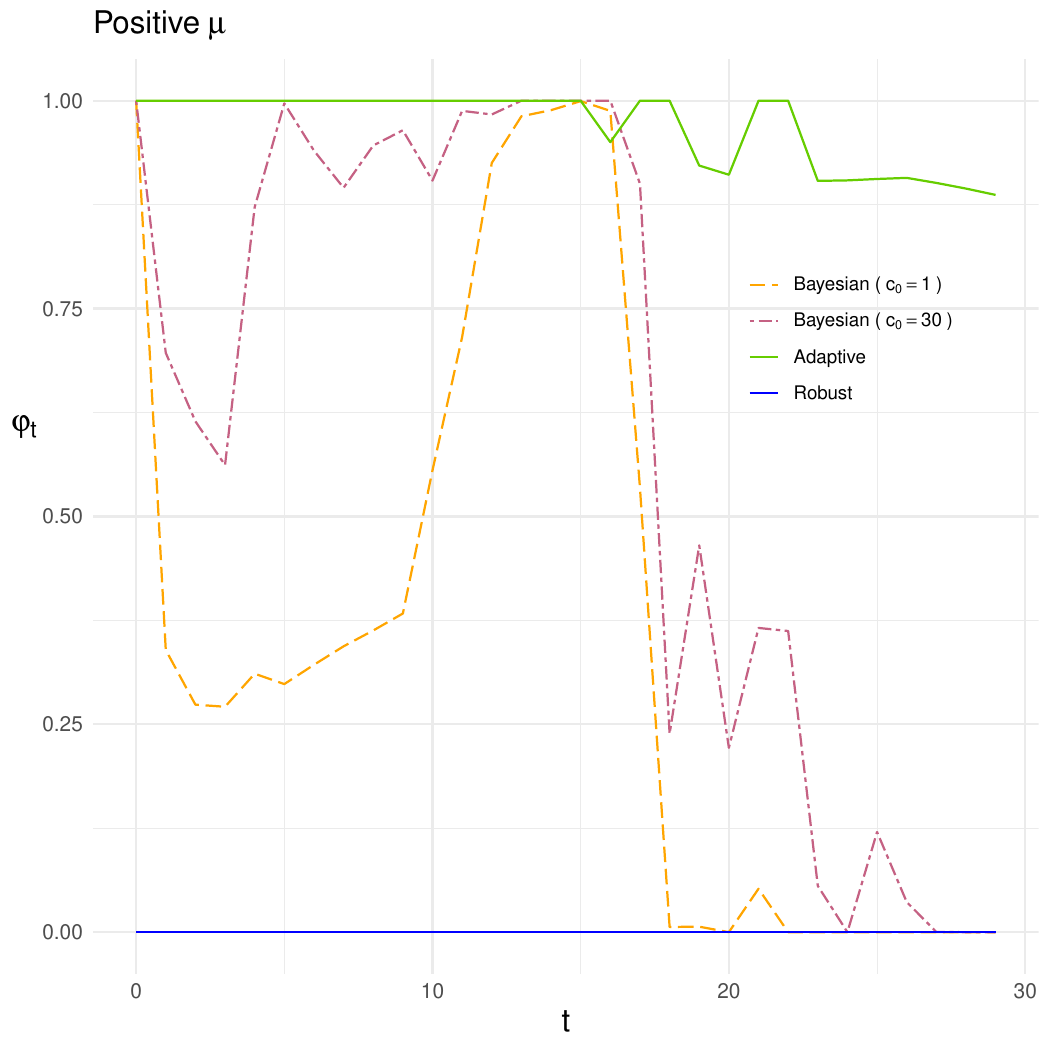} &
\includegraphics[page=3,width=0.48\textwidth]{plots_w_meanbar.pdf} \\
 $\hat\mu_0=4.615\times10^{-3}$, $\hat\sigma_0=5.609\times10^{-2}$ & $\hat\mu_0=-3.987\times10^{-3}$, $\hat\sigma_0=6.288\times10^{-2}$ \\
\end{tabular}
\caption{Path of nonparametric Bayesian strategy $\varphi^{ab}$ in comparison to strong robust and adaptive; Case 1.}
\label{fig:case1}
\end{figure}

\begin{figure}
\centering
\begin{tabular}{cc}
\includegraphics[page=5,width=0.48\textwidth]{plots_w_meanbar.pdf} &
\includegraphics[page=7,width=0.48\textwidth]{plots_w_meanbar.pdf} \\
 $\hat\mu_0=4.615\times10^{-3}$, $\hat\sigma_0=5.609\times10^{-2}$ & $\hat\mu_0=-3.987\times10^{-3}$, $\hat\sigma_0=6.288\times10^{-2}$ \\
\end{tabular}
\caption{Distribution of nonparametric Bayesian utility $\varphi^{ab}$ in comparison to strong robust and adaptive; Case 1.}
\label{fig:case1-dist}
\end{figure}

We will omit the detailed description of the algorithms that compute the optimal strong robust and time consistent adaptive strategies, as they are direct modifications of the algorithm introduced in Section~\ref{sec:ml} tailored to Belmman equations \eqref{eq:bellman_sr} and \eqref{eq:bellman_ad}.
For comparison, we will simulate $t_0$ observations $Z_{-t_0},\ldots,Z_{-1}$ from the sampling measure and compute $\hat\mu_0$, $\hat\sigma^2_0$, and $\tau(t_0, \hat\mu_0, \hat\sigma^2_0)$ according to the observations.
The robust parameter set for the strong robust approach is defined as $C=\tau(t_0 , \hat\mu_0, \hat\sigma^2_0)$, and the initial guess of the unknown parameters used in the adaptive approach are chosen as $\hat\mu_0$, and $\hat\sigma^2_0$.
Through the rest of the paper, we denote by $y^{\text{ab}}_T$, $y^{\text{sr}}_T$, and $y^{\text{ad}}_T$ the terminal wealth generated by the adaptive Bayesian, strong robust, and adaptive methods, respectively.
The respective optimal strategies are denoted as $\varphi^{\text{ab},*}_t$, $\varphi^{\text{sr},*}_t$, and $\varphi^{\text{ad},*}_t$, $t=0,\ldots,T-1$.
We will generate $N'$ paths of the out-of-sample random noise $Z^i_t$, $i=1,\ldots,N'$, $t=1,\ldots,T$, from the sampling measure.
Then, we estimate the optimal strategies $\varphi^{\text{ab},*}_t(y^{\text{ab},i}_t)$, $\varphi^{\text{sr},*}_t(y^{\text{sr} ,i}_t)$, and $\varphi^{\text{ad},*}_t(y^{\text{ad} ,i}_t)$, $t=0,\ldots,T-1$, $i=1,\ldots,N'$, by using the corresponding GP surrogates, and in turn update
\begin{align*}
y^{\text{ab},i}_{t+1}&=f_Y(y^{\text{ab},i}_t,\varphi^{\text{ab},*}_t(y^{\text{ab},i}_t),Z^i_{t+1}),\\
y^{\text{sr},i}_{t+1}&=f_Y(y^{\text{sr},i}_t,\varphi^{\text{sr},*}_t(y^{\text{sr},i}_t),Z^i_{t+1}),\\
y^{\text{ad},i}_{t+1}&=f_Y(y^{\text{ad},i}_t,\varphi^{\text{ad},*}_t(y^{\text{ad},i}_t),Z^i_{t+1}),
\end{align*}
for $i=1,\ldots,N'$, and $t=0,\ldots,T-1$.

\subsection{Numerical Results}
In this section, we compare the performance of the adaptive Bayesian, strong robust, and adaptive approaches by analyzing the relevant statistics of
\begin{align*}
W^{\text{ab}}&:=\left(\frac{(y^{\text{ab} ,1}_T)^{1-\eta}-1}{1-\eta},\ldots,\frac{(y^{\text{ab} ,N'}_T)^{1-\eta}-1}{1-\eta}\right),\\
W^{\text{sr}}&:=\left(\frac{(y^{\text{sr} ,1}_T)^{1-\eta}-1}{1-\eta},\ldots,\frac{(y^{\text{sr} ,N'}_T)^{1-\eta}-1}{1-\eta}\right),\\
W^{\text{ad}}&:=\left(\frac{(y^{\text{ad} ,1}_T)^{1-\eta}-1}{1-\eta},\ldots,\frac{(y^{\text{ad} ,N'}_T)^{1-\eta}-1}{1-\eta}\right).
\end{align*}
To this end, we choose one unit of time as $1/30$ year, and $T=30$. The yearly interest rate is 2\%, so that $r=0.02/30=6.667\times10^{-4}$. The number of paths of sample sites for solving the Bellman equations is $N=600$, and the number of out-of-sample paths is $N'=200$. Initial endowment for investing is $y_0=100$. Some other parameters are set as $t_0=100$ and $M=4$.
We consider two cases of bimodal sampling measures.
In case 1, each $Z_t$ with 50\% chance comes from normal distribution $N(\mu_1,\sigma^2_1)$ and with 50\% chance from normal distribution $N(\mu_2,\sigma^2_2)$, where $\mu_1=-0.02/30=-6.667\times10^{-4}$, $\sigma_1=0.4\times\sqrt{1/30}=7.303\times10^{-2}$, and $\mu_2=0.13/30=4.333\times10^{-3}$, $\sigma_2=0.3\times\sqrt{1/30}=5.477\times10^{-2}$.
In case 2, $\mu_1=0.04/30=1.333\times10^{-3}$, $\sigma_1=0.3\times\sqrt{1/30}=5.477\times10^{-2}$, and $\mu_2=0.13/30=4.333\times10^{-3}$, $\sigma_2=0.5\times\sqrt{1/30}=9.129\times10^{-2}$.
The risk-averse parameter $\eta$ chosen for computations in case 1 and 2 are 1.5 and 1.002, respectively.

For both cases, we randomly generate $(\hat\mu_0,\hat\sigma^2_0)$. The initial Dirichlet process for adaptive Bayesian is $\mathscr{D}(c_0 P_0)$ where $P_0$ is normal distribution $N(\hat{\mu}_0,\hat\sigma^2_0)$. A strong robust investor assumes that the one period log-return has a normal distribution $N(\mu,\sigma^2)$ and her robust parameter set $\tau(t_0,\hat{\mu}_0,\hat\sigma^2_0)$ is the 80\% confidence region centered at $(\hat{\mu}_0,\hat\sigma^2_0)$. An adaptive investor also assumes that the model is normal and the initial guess for the parameters are $\hat{\mu}_0$ and $\hat\sigma^2_0$.
\begin{remark}
Note that in the above setup, the adaptive Bayesian investor also assumes that the model for the one-period log-return is $N(\hat{\mu}_0,\hat\sigma^2_0)$ at the starting time.
After that, at any time $t>0$ the model she uses is the weighted average of $N(\hat{\mu}_0,\hat\sigma^2_0)$ and the empirical distribution with respective weights $\frac{c_0}{c_0+t}$ and $\frac{t}{c_0+t}$.
\end{remark}

\begin{table}[ht]
\centering
\renewcommand{\arraystretch}{1.3}
\begin{tabular}{c|ccccccc}
\hline
 \multicolumn{1}{c}{ }& \multicolumn{7}{||c}{$\hat\mu_0=6.255\times10^{-4}$, $\hat\sigma_0=7.090\times10^{-2}$} \\
 \cline{2-8}
 \multicolumn{1}{c}{ }& \multicolumn{5}{||c}{AB} & \multicolumn{1}{|c}{SR} & \multicolumn{1}{|c}{AD} \\
  \multicolumn{1}{c}{ }& \multicolumn{1}{||c}{$c_0=1$} & $c_0=5$ & $c_0=10$ & \multicolumn{1}{c}{$c_0=20$} & \multicolumn{1}{c}{$c_0=30$} & \multicolumn{1}{|c}{ } & \multicolumn{1}{|c}{ }  \\
  \hline
   mean($W$)  & \multicolumn{1}{||c}{4.7079} & 4.7004 & 4.7098  & 4.7096 & 4.7028 & \multicolumn{1}{|c}{4.6038} & \multicolumn{1}{|c}{4.6918}\\
 var($W$)   & \multicolumn{1}{||c}{0.0912} & 0.0674 & 0.0844 & 0.0867 & 0.0850 & \multicolumn{1}{|c}{6.701e-12} & \multicolumn{1}{|c}{0.05942}\\
 $q_{0.30}(W)$  & \multicolumn{1}{||c}{4.5071} & 4.5209 & 4.5130 & 4.5059 & 4.5005 & \multicolumn{1}{|c}{4.6038} & \multicolumn{1}{|c}{4.5507}\\
 $q_{0.90}(W)$  & \multicolumn{1}{||c}{5.1332} & 5.0739 & 5.1463 & 5.1601 & 5.1691 & \multicolumn{1}{|c}{4.6038} & \multicolumn{1}{|c}{5.0224}\\
 $\text{max}(W)$   & \multicolumn{1}{||c}{5.3100} & 5.4023 & 5.5333 & 5.5709 & 5.5200 & \multicolumn{1}{|c}{4.6038} & \multicolumn{1}{|c}{5.310}\\
 $\text{min}(W)$   & \multicolumn{1}{||c}{4.1512} & 4.1635 & 4.2018 & 4.2389 & 4.2327 & \multicolumn{1}{|c}{4.6038} & \multicolumn{1}{|c}{4.0793}\\
 \hline
\end{tabular}
\bigskip
\caption{Mean, variance, 30\%-quantile, 90\%-quantile, maximum, and minimum of the out-of-sample terminal utility for the AB, SR and AD methods;  Case~2-1.}
\label{table:case2-1}
\end{table}

\begin{table}[ht]
\centering
\renewcommand{\arraystretch}{1.3}
\begin{tabular}{c|ccccccc}
\hline
 \multicolumn{1}{c}{ }& \multicolumn{7}{||c}{$\hat\mu_0=-8.347\times10^{-3}$, $\hat\sigma_0=7.805\times10^{-2}$} \\
 \cline{2-8}
 \multicolumn{1}{c}{ }& \multicolumn{5}{||c}{AB} & \multicolumn{1}{|c}{SR} & \multicolumn{1}{|c}{AD} \\
  \multicolumn{1}{c}{ }& \multicolumn{1}{||c}{$c_0=1$} & $c_0=5$ & $c_0=10$ & \multicolumn{1}{c}{$c_0=20$} & \multicolumn{1}{c}{$c_0=30$} & \multicolumn{1}{|c}{ } & \multicolumn{1}{|c}{ }  \\
  \hline
   mean($W$)  & \multicolumn{1}{||c}{4.6980} & 4.6860 & 4.6823  & 4.6668 & 4.6483 & \multicolumn{1}{|c}{4.6038} & \multicolumn{1}{|c}{4.6036}\\
 var($W$)   & \multicolumn{1}{||c}{0.0758} & 0.0640 & 0.0580 & 0.0453 & 0.0310 & \multicolumn{1}{|c}{6.701e-12} & \multicolumn{1}{|c}{6.943e-4}\\
 $q_{0.30}(W)$  & \multicolumn{1}{||c}{4.5139} & 4.5387 & 4.5687 & 4.5929 & 4.5854 & \multicolumn{1}{|c}{4.6038} & \multicolumn{1}{|c}{4.6038}\\
 $q_{0.90}(W)$  & \multicolumn{1}{||c}{5.1097} & 5.0613 & 5.0462 & 5.0009 & 4.8673 & \multicolumn{1}{|c}{4.6038} & \multicolumn{1}{|c}{4.6049}\\
 $\text{max}(W)$   & \multicolumn{1}{||c}{5.4234} & 5.3652 & 5.4676 & 5.4587 & 5.3856 & \multicolumn{1}{|c}{4.6038} & \multicolumn{1}{|c}{4.8089}\\
 $\text{min}(W)$   & \multicolumn{1}{||c}{4.1833} & 4.2164 & 4.2659 & 4.2580 & 4.2897 & \multicolumn{1}{|c}{4.6038} & \multicolumn{1}{|c}{4.3850}\\
 \hline
\end{tabular}
\bigskip
\caption{Mean, variance, 30\%-quantile, 90\%-quantile, maximum, and minimum of the out-of-sample terminal utility for the AB, SR and AD methods;  Case~2-2.}
\label{table:case2-2}
\end{table}

\noindent{\bf Case 1.} In this setup, we randomly generate two sets of values for the initial guess: $(\hat{\mu}_0,\hat\sigma^2_0)=(4.615\times10^{-3},5.609\times10^{-2})$ and $(\hat{\mu}_0,\hat\sigma^2_0)=(-3.987\times10^{-3},6.288\times10^{-2})$.
Then we solve \eqref{eq:example2-2}, \eqref{eq:bellman_sr}, and \eqref{eq:bellman_ad} for these two cases. The resulting strategies from both cases are analyzed on the same set of out-of-sample random noise.

It is worth mentioning that a reasonable choice of the robust parameter set $\tau(t_0,\hat{\mu}_0,\hat\sigma^2_0)$ for the strong robust approach will usually lead to trivial solutions.
Estimating the mean log-return is notoriously inefficient and slow even if the model is indeed Gaussian.
Therefore, by assuming a wrong model in this case, the set $\tau(t_0,\hat{\mu}_0,\hat\sigma^2_0)$ chosen at 80\% confidence level is too large and the worst-case parameter in such set will result a strategy that invests nearly all the money in the banking account at all times (cf. Figure~\ref{fig:case1}).
We also see this effect from both Table~\ref{table:case1-1} and \ref{table:case1-2}, as the mean, quantiles, maximum, and minimum values of $W^{\text{sr}}$ are all the same and the variance of $W^{\text{sr}}$ is almost 0.
Such an extremely conservative strategy will produce a relatively higher 30\% quantile and minimum value of $W^{\text{sr}},$ which are both measures of investment risk.

The adaptive approach chooses the strategy based on the current view of the model parameters which are heavily affected by the initial guess.
Recall that optimal strategies for both cases of $(\hat\mu_0,\hat\sigma^2_0)$ are tested on the same set of out-of-sample random noise.
Two opposite views of the model parameters will lead to strategies that are very different (cf. Figure~\ref{fig:case1}).
For positive $\hat\mu_0$, the AD strategy is very aggressive as we observe much higher values of the mean, 90\% quantile, maximum and much lower values of the 30\% quantile and minimum of $W^{\text{ad}}$ compared to the case of negative $\hat\mu_0$ as in Table~\ref{table:case1-1} and Table~\ref{table:case1-2}.
This means, in general, that the parametric adaptive method is very sensitive to the initial guess and not robust to model mispecification.
Especially when in a market that is neither bull or bear, the investor can easily be confused by the initial guess for relative smaller number of observation and trading periods.

\begin{figure}
\centering
\begin{tabular}{cc}
\includegraphics[page=2,width=0.48\textwidth]{plots_w_meanbar.pdf} &
\includegraphics[page=4,width=0.48\textwidth]{plots_w_meanbar.pdf} \\
 $\hat\mu_0=6.255\times10^{-4}$, $\hat\sigma_0=7.090\times10^{-2}$ & $\hat\mu_0=-8.347\times10^{-3}$, $\hat\sigma_0=7.805\times10^{-2}$ \\
\end{tabular}
\caption{Path of nonparametric Bayesian strategy $\varphi^{ab}$ in comparison to strong robust and adaptive; Case 2.}
\label{fig:case2}
\end{figure}

\begin{figure}
\centering
\begin{tabular}{cc}
\includegraphics[page=6,width=0.48\textwidth]{plots_w_meanbar.pdf} &
\includegraphics[page=8,width=0.48\textwidth]{plots_w_meanbar.pdf} \\
 $\hat\mu_0=6.255\times10^{-4}$, $\hat\sigma_0=7.090\times10^{-2}$ & $\hat\mu_0=-8.347\times10^{-3}$, $\hat\sigma_0=7.805\times10^{-2}$ \\
\end{tabular}
\caption{Distribution of nonparametric Bayesian utility $\varphi^{ab}$ in comparison to strong robust and adaptive; Case 2.}
\label{fig:case2-dist}
\end{figure}

For the adaptive Bayesian framework, we test different choices of $c_0=1$, 5, 10, 20, and 30. As $c_0$ increases, the weight of $P_0$ in the posterior mean of the Dirichlet process increases correspondingly.
Hence the optimal strategy $\varphi^{ab,*}_t$ for $c_0=30$ lies in between $\varphi^{ab,*}_t$ for $c_0=1$ and the AD strategy $\varphi^{ad,*}_t$.
We also observe in Figure~\ref{fig:case1-dist} that the distribution of $W^{ab}$ will converge to that of $W^{ad}$ when $c_0$ becomes large.
Higher weight of $P_0$ will in theory reduce the possibility of overfitting and avoid the learning of the underlying model from picking up too much market noise, especially at early time stages.
It can also be seen as a tuning parameter for the purpose of risk management: in Table~\ref{table:case1-2}, when the initial view of the market is ``pessimistic'' (negative initial guess of the mean log-return), larger $c_0$ will make the investment strategy more conservative and we observe that the 30\% quantile and minimum value of $W^{\text{ab}}$ increases respect to $c_0$.
Accordingly, the 90\% quantile and maximum value decreases since the conservative strategy will be less likely to take advantage of stock price increasing.
In Table~\ref{table:case1-1}, the initial view of the market is ``optimistic'' (positive initial guess of the mean log-return), larger $c_0$ will make the strategy more aggressive and less risk averse.
The variance and 90\% quantile of $W^{\text{ab}}$ becomes larger and the 30\% quantile becomes smaller for higher $c_0$.
Interestingly, the $\max(W^{\text{ad}})$ decreases and $\min(W^{\text{ad}})$ increases in this case.
To understand this, note that in Figure~\ref{fig:case1}, the AD strategy presents a ``mirror'' effect as the strategy decreases when $\hat\mu_0>0$ and increases when $\hat\mu_0<0$ for time steps that are close to $T$.
This means that, by assuming a Gaussian model for the log-return, the AD strategy will converge to a level that is between 0 and 1 on any $Z$-path.
Hence, for large $c_0$, the AB strategy will demonstrate the effect of such convergence.
On the other hand, such observation signals a warning about model misspecification: by mistakenly assuming a Gaussian log-return, the AD strategy for $\hat{\mu}_0<0$ starts to invest money in the risky asset when the market is bad and other investors are reducing their shares of the stock.
In the nonparametric Bayesian framework, a correction will be imposed: in Table~\ref{table:case1-1}, a large enough $c_0$ will eventually make $q_{0.90}(W^{\text{ab}})$ higher than $q_{0.90}(W^{\text{ar}})$; and in Table~\ref{table:case1-2}; it will make $\min(W^{\text{ab}})$ higher than $\min(W^{\text{ar}})$.

\begin{figure}
\centering
\begin{tabular}{cc}
\includegraphics[width=0.48\textwidth]{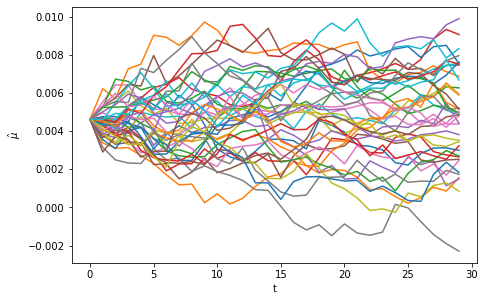} &
\includegraphics[width=0.48\textwidth]{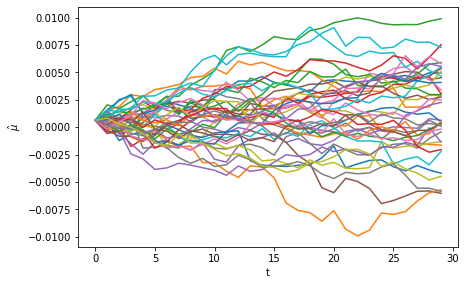} \\
 Case 1 & Case 2 \\
\end{tabular}
\caption{Learning paths of $\hat{\mu}$ by the adaptive approach.}
\label{fig:mu-path}
\end{figure}

Amongst these three methods, it is not surprising that AB produces lower 30\% quantile and minimum value; higher variance, 90\% quantile, and maximum value of the terminal utility compared to the values generated by conservative strategies.
This comparison result is reversed when AB is compared to an aggressive methodology.
Nevertheless, the estimated mean of the terminal utility produced by AB is always higher than the ones given by SR and AD.
These numbers show that AB can be viewed as a preferred approach compared to the other two.
Finally, we stress that in both ``optimistic'' and ``pessimistic'' cases, the numbers generated by AB are similar, which confirms that such methodology is robust to model mispecification and randomness in data observation.

\noindent{\bf Case 2.} In this setup, we randomly generate two sets of values for the initial guess: $(\hat{\mu}_0,\hat\sigma^2_0)=(6.255\times10^{-4}, 7.090\times10^{-2})$ and $(\hat{\mu}_0,\hat\sigma^2_0)=(\hat\mu_0=-8.347\times10^{-3},\hat\sigma_0=7.805\times10^{-2})$.
Then we solve \eqref{eq:example2-2}, \eqref{eq:bellman_sr}, and \eqref{eq:bellman_ad} for these two cases. The resulting strategies from both cases are analyzed on the same set of out-of-sample random noise.

The numerical results we obtain are quite similar to case 1.
One significant difference we have is that for $\hat{\mu}_0>0$, the AD strategy produces a lower $\text{var}(W)$ than AB, and when $c_0$ increases, the variance of the terminal utility from the AB strategy decreases.
This is also confirmed by the box-plot in Figure~\ref{fig:case2-dist}.
In general, a lower variance of the terminal wealth/utility is a result of conservative strategies, and indeed we see from Figure~\ref{fig:case2} that the AD strategy is not necessarily more aggressive than AB in this case.
Such phenomenon is explained by Figure~\ref{fig:mu-path}: in case 2, the estimated $\hat\mu_0$, despite being positive, is smaller than $\hat\mu_0$ in case 1.
Therefore, many paths of $\hat\mu_t$ go below 0 and it is the driving force that the AD strategy becomes conservative even though the initial guess is somewhat optimistic.
In any case, we have that the AB method still produces the highest mean terminal utility, and hence it is the preferred approach compared to the other two.

\end{section}

\begin{section}{Conclusion}
We have developed a nonparametric Bayesian approach to deal with stochastic control problems under model uncertainty.
Our motivation comes from the multiple desirable features of Dirichlet process and aims to avoid model misspecification inherent in assumptions of parametric models.
By augmenting the Bayesian posterior mean to the state variable, we integrate the optimization and online learning when the distribution of the underlying random process is unknown.
We prove the necessary regularity of the relevant functions of the augmented state variable so that the nonparametric adaptive Bayesian control problem is solved by dynamic programming, and the measurable optimal control exists.
The resulting case study provides new insights on the interaction between the prior mean and adaptive learning in the context of utility maximization problem.
The nonparametric framework is robust to the random perturbations in the learning process, and the weight of the prior mean in the dynamic learning can be used as a tuning parameter for the purpose of risk management.

In order to make the proposed framework numerically feasible, we develop an algorithm based on the machine learning technique that utilizes the Gaussian process surrogates.
Following the idea introduced in \cite{CL2019}, we build multiple surrogates for different pieces of the Bellman recursion, not only for the value function but also for the feedback control.
To handle the infinite dimensional state space associated with the nonparametric learning process, we map each distribution to the corresponding vector of moments to reduce the dimension of the state space.
Instead, it is possible to modify the kernel function of the Gaussian process surrogate and enable it to evaluate the distance between probability distributions.
Further investigation of such proposal and the study of extending our approach to the case of multi-dimensional distributions are deferred to future research.
\end{section}

\bibliographystyle{siam}
\bibliography{Bayesian}

\end{document}